\newcommand\cA{{\mathcal A}}
\newcommand\cB{{\mathcal B}}
\newcommand\cG{{\mathcal G}}
\newcommand{\Vect}[1]{\underline{#1}}
\theoremstyle{plain}
\newtheorem{theorem}{Theorem}[section]
\newtheorem{lemma}[theorem]{Lemma}
\newtheorem{cor}[theorem]{Corollary}
\theoremstyle{definition}
\newtheorem{claim}[theorem]{Claim}
\newtheorem{obs}[theorem]{Observation}
\newcommand\lref[1]{Lemma~\ref{lem:#1}}
\newcommand\tref[1]{Theorem~\ref{thm:#1}}
\newcommand\cref[1]{Corollary~\ref{cor:#1}}
\newcommand\sref[1]{Section~\ref{sec:#1}}
\begin{document}

\title{Identifying codes and searching with balls in graphs}

\author{Younjin Kim\thanks{Department of Mathematical Sciences, KAIST, 291 Daehak-ro Yuseong-gu Daejeon, 305-701 South Korea. Email: younjin@kaist.ac.kr. The author is supported by Basic Science Research Program through the
National Research Foundation of Korea(NRF) funded by the Ministry of
Science, ICT \& Future Planning (2011-0011653) }
 \and Mohit Kumbhat\thanks{Sungkyunkwan Unversity, Suwon, S.Korea 440-746. Email:mohitkumbhat@gmail.com } \and
Zolt\'an L\'or\'ant Nagy\thanks{Alfr\'ed R\'enyi Institute of Mathematics, P.O.B. 127, Budapest H-1364, Hungary and E\"{o}tv\"{o}s Lor\'{a}nd University, Department of Computer Science, H-1117 Budapest P\'{a}zm\'{a}ny P\'{e}ter s\'{e}t\'{a}ny 1/C. Email: nagyzoltanlorant@gmail.com. The author was supported by the Hungarian National Foundation for Scientific Research (OTKA), Grant no. K 81310.} \and Bal\'azs Patk\'os\thanks{MTA--ELTE Geometric and Algebraic Combinatorics Research Group, H--1117 Budapest, P\'azm\'any P.\ s\'et\'any 1/C, Hungary. Email: patkosb@cs.elte.hu. Research is supported by and
 the J\'anos Bolyai Research Scholarship of the Hungarian Academy of Sciences.} \and Alexey Pokrovskiy \thanks{Methods for Discrete Structures, Berlin. Email: apokrovskiy@zedat.fu-berlin.de} \and M\'at\'e Vizer\thanks{Alfr\'ed R\'enyi Institute of Mathematics, P.O.B. 127, Budapest H-1364, Hungary. Email: vizermate@gmail.com Research supported by
    Hungarian National Scientific Fund, grant number: 83726.} }

\maketitle

\begin{abstract}
Given a graph $G$ and a positive integer $R$ we address the following combinatorial search theoretic problem: What is the minimum number of queries
of the form ``does an unknown vertex $v \in V(G)$ belong to the ball of radius $r$ around $u$?" with $u \in V(G)$ and $r\le R$ that is needed to determine $v$. We consider both the adaptive case when the $j$th query might depend on the answers to the previous queries and the non-adaptive case when all queries must be made at once. We obtain bounds on the minimum number of queries for hypercubes, the Erd\H os-R\'enyi random graphs and graphs of bounded maximum degree .
\end{abstract}

\textit{Keywords}: identifying codes, combinatorial search, hypercube, Erd\H os-R\'enyi random graph, bounded degree graphs

\section{Introduction}
Combinatorial search theory is concerned with problems of the following type ``given a finite set $S$ and an unknown element $x\in S$, find $x$ as quickly as possible''. In order to find $x$, we are allowed to ask questions of the type ``is $x$ contained in a subset $B\subseteq S$?'' for various $B\subseteq S$. There are many real-world search problems which are of this type. For example given a set of blood samples, we might want to identify one which is infected. Or given a building, we might want to determine which room a person is hiding in.

Notice that to guarantee finding the unknown element $x$ in a set $S$ we always need to ask at least $\log_2 |S|$ questions. Indeed, to be able to distinguish between two elements $x, x'\in S$, we must at some point query a set $B\subseteq S$ such that $x\in B$ and $x'\not\in B$. However if we ask less than  $\log_2 |S|$ questions, then by the Pigeonhole Principle, there would be two  elements $x$ and $x'\in S$ which receive the same sequence of yes/no answers.

The lower bound  $\log_2 |S|$ on the number of questions can be tight.
In fact it is always possible to find $x$ using $\lceil\log_2 |S|\rceil$ questions. Indeed first we divide $S$ is half and ask which half $x$ is in. Then we divide the half containing $x$ in half again, and find out which one $x$ is in. Proceeding this way, we reduce the subset of $S$ which $x$ may be in by half at every step, and hence we find $x$ after at most $\lceil\log_2 |S|\rceil$ queries.  This is the so called \emph{halving search}. 

Notice that in the above search the set we query at each step depends on the answers we got in the previous steps. We say that a search is \emph{adaptive} if a question is allowed to depend on answers to previous questions. In practice we often would like to make all our queries at the same time (for example if each individual query takes a long time, it would save time to make queries simultaneously). We say that a search is \emph{non-adaptive} if the queries do not depend on answers obtained from previous ones. Just like in the adaptive case, there exist non-adaptive searches using the optimal  $\log_2 |S|$ number of queries. For example if $S$ is the set of $0-1$ sequences of length $n$, then we can find $x=(x_1, \dots, x_n)$ using the $n=\log_2 |S|$ queries ``is $x_i$ equal to zero or one?''  

Note that the answers to a set $Q$ of query sets surely determine the unknown element $x$ if and only if for any pair of elements $s_1,s_2 \in S$ there exists a query $B \in Q$ with $|B \cap \{s_1,s_2\}|=1$. In this case we say that $B$ \textit{separates} $s_1$ and $s_2$, and $Q$ is said to be a \textit{separating system} or said to possess the \textit{separating property}.

In this paper we study a search problem when the set $S$ is a metric space, and we are only allowed to query sets $B$ which are balls in the metric space. We call this the \emph{ball search}. There are natural search problems of this type. For example if we want to track someone moving through a building, we might set up sensors at various points in the building which tell us if the person is within a certain range. This problem is inherently non-adaptive since the person is moving (and it would usually be inpractical to change the location of the sensors as the person moves). Another example of a ball search is to locate some object in the night sky using a telescope. This problem is inherently adaptive since once we capture the object with the telescope, we would want to zoom in in order to get a better idea of its location.

Ball searches have been studied before, under the name of \emph{identifying codes on graphs}. 
A ball $B_r(v)$ of radius $r$ at vertex $v$ in a graph $G$ is the set of vertices in $G$ that have distance at most $r$ from $v$. An $(r,\le l)$-identifying code of the graph $G$ is a set $C$ of vertices such that for every pair of distinct subsets $X,Y \subset V(G)$ with $|X|,|Y| \le l$ the sets $B_r(X)\cap C$ and $B_r(Y)\cap C$ are different and non-empty. Notice that given a graph with an $(r,\le 1)$-identifying code $C$, we can always find an unknown vertex using the nonadaptive search querying the balls $\{B_r(c):c\in C\}$. Identifying codes were introduced by Karpovsky, Chakrabarty, and Levitin \cite{KCL} and the problem of determining the minimum size $i^{(l)}_r(G)$ of an identifying code in $G$ has since attracted the attention of many researchers  (for a full bibliography see \cite{L}). An adaptive version of identifying codes was introduced by Moncel in his doctoral thesis \cite{M} and in \cite{BGLM}. The minimum number of queries needed in the adaptive case is denoted by $a^{(l)}_r(G)$.

We will study a slight variant of identifying codes in which the restriction that the balls in the search must cover \emph{all} the vertices of $G$ is omitted. The reason we study this variant is that it seems to have links to a  natural combinatorial object (namely the Fano Plane. See Section 2 for details). We denote by $M(G,r)$ the minimum number of queries needed to nonadaptively find an unknown vertex $v\in G$ using balls of radius $r$.  Similarly we denote by $A(G,r)$ the minimum number of queries needed to adaptively find an unknown vertex $v\in G$ using balls of radius $r$. Notice that we always have $i^{(1)}_r(G)\leq  M(G,r)\leq  i^{(1)}_r(G)+1$ and $a^{(1)}_r(G)\leq  A(G,r)\leq  a^{(1)}_r(G)+1$. Therefore, since we will mostly be interested in asymptotic estimates, it usually won't matter whether we study  the quantity $M(G,r)$ or $i^{(1)}_r(G)$.

Notice that in the above ball searches we specified that we could only query balls of radius \emph{exactly} $r$. With this restriction it is not always possible to find an unknown vertex $v$---for example if $r$ is bigger than the diameter of $G$ (the maximum distance between two vertices in $G$), then querying balls of radius $r$ gives no information about the location of $v$.
At the 5th Eml\'ekt\'abla Workshop, Gyula Katona asked what happens \cite{eml} if balls of radius \textit{at most} $r$ should be allowed as queries. We'll denote by  $M(G,\leq r)$ and $A(G,\leq r)$ the minimum number of queries to  find an unknown vertex $v\in G$ using balls of radius $r$ in the nonadaptive and adaptive searches respectively. We'll denote by $M(G)$ and $A(G)$ the minimum number of queries to  find an unknown vertex $v\in G$ using balls (of any radius) in the nonadaptive and adaptive searches respectively.   Notice that since balls of radius zero are just single vertices in $G$, we always have $M(G,\leq r), A(G,\leq r)\leq |G|$. Also, we trivially have $M(G,\leq r)\leq M(G, r)$ and $A(G,\leq r)\leq A(G, r)$. The quantities $M(G,\leq r)$ and $A(G,\leq r)$  are monotonic in the sense that $M(G)\leq M(G,\leq r+1)\leq M(G,\leq r)$ and $A(G)\leq A(G,\leq r+1)\leq A(G,\leq r)$ hold for all $r$.

The first metric spaces in which we will study the ball search are hypercubes.
Let $Q_n$ be the $n$ dimensional hypercube---the set of  all $0-1$ vectors of length $n$. We define a graph on $Q_n$  by placing an edge between two vectors whenever they differ in exactly one entry. In this graph the distance $d(\underline{u},\underline{v})$ between two vertices is exactly the \emph{Hamming distance} between $\underline{u}$ and $\underline{v}$ i.e. the number of entries on which $\underline{u}$ and $\underline{v}$ differ.
If $\underline{c}$ is an element of $Q_n$  and $r$ is a non-negative integer, then $B(\underline{c}, r)$ denotes the ball of center $\underline{c}$ and radius $r$, that is, $B(\underline{c}, r):=\{\underline{v}: \underline{v}\in Q_n,  d(\underline{c},\underline{v})\leq r\}.$ The most recent upper and lower bounds on $i^{(l)}_r(Q_n)$ were proved in \cite{CCHL,ELR,EJLR1,EJLR2,JL}. Adaptive identification in $Q_n$ was studied by Junnila \cite{J} who obtained lower and upper bounds on $a^{(1)}_1(Q_n)$. To be able to state our results let $K(n,r)$ denote the minimum number of balls of radius $r$ that cover all vertices of $Q_n$. The centers of such balls are said to form \textit{covering codes} in $Q_n$ \cite{CHLL}.

\begin{theorem} \label{thm:main}
The functions $A(Q_n)$, $A(Q_n,\leq r)$ and $A(Q_n, r)$  satisfy the following inequalities:
\begin{enumerate}[(i)]
\item $n\leq A(Q_n) \leq n-1+ \left\lceil \log(n+1)\right\rceil$.
\item $K(n,r)-1\le A(Q_n,\leq r) \le K(n,r)+n-1+\lceil \log(r+1)\rceil$. There is a constant $C$ such that if $r\le n/2-C\sqrt{n}\log n$, then $A(Q_n,\leq r)=(1+o(1))K(n,r)$.
\item $K(n,r)-1\le A(Q_n,r) \le K(n,r)+\sum_{i=0}^r \binom{n}{r}$. There is a constant $C'$ such that if $r\le C'n$, then $A(Q_n, r)=(1+o(1))K(n,r)$.
\end{enumerate}
\end{theorem}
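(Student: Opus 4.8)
The plan is to handle all three parts with two ingredients: a covering lower bound extracted from the ``all-no'' branch of the decision tree, and an upper bound that first localizes $\underline v$ to a single ball and then resolves it inside that ball.

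\emph{Lower bounds.} For (i) I use the information-theoretic bound: queries are binary and $|Q_n|=2^n$, so at least $\lceil\log_2 2^n\rceil=n$ queries are needed. For (ii) and (iii), fix any correct adaptive strategy and let an adversary answer \emph{no} to every query as long as at least two vertices lie outside all balls queried so far. The strategy cannot stop while two such vertices remain (they are indistinguishable), so the process continues until the queried balls $B_1,\dots,B_t$ (of radius $\le r$, resp.\ exactly $r$) cover at least $2^n-1$ vertices. Enlarging each $B_i$ to the radius-$r$ ball with the same centre and adding one radius-$r$ ball at the at most one uncovered vertex covers $Q_n$ with $t+1$ balls of radius $r$, whence $t\ge K(n,r)-1$; since $t$ queries were made, $A(Q_n,\le r),A(Q_n,r)\ge K(n,r)-1$.

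\emph{Upper bound for (i), and the reduction for (ii).} First I determine the weight $w=d(\underline 0,\underline v)$ by binary search on the radius: the query $B(\underline 0,s)$ answers ``yes'' exactly when $w\le s$, so $\lceil\log(n+1)\rceil$ queries locate $w\in\{0,\dots,n\}$. The key point is that, once $w$ is known, a single ball reveals a coordinate: writing $\underline e_i$ for the vector with a lone $1$ in position $i$, one computes $d(\underline e_i,\underline v)=w-1$ if $v_i=1$ and $w+1$ if $v_i=0$, so $B(\underline e_i,w-1)$ returns ``yes'' iff $v_i=1$. Querying this for $i=1,\dots,n-1$ determines those coordinates and the last is forced by $\sum_i v_i=w$, giving the bound $n-1+\lceil\log(n+1)\rceil$. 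For (ii) I spend at most $K(n,r)$ queries running through a minimum covering code until the first ``yes'', localizing $\underline v$ to some $B(\underline c,r)$; after translating so $\underline c=\underline 0$ we have $w\le r$, found with $\lceil\log(r+1)\rceil$ queries, and the coordinate queries $B(\underline e_i,w-1)$ now have radius $w-1\le r$, so they are admissible. Summing gives (ii).

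\emph{Upper bound for (iii).} Here the coordinate trick fails, since the balls $B(\underline e_i,w-1)$ have the wrong radius and only radius-$r$ balls are allowed; this is the main obstacle. After localizing $\underline v$ to a ball $B(\underline c,r)$ with at most $K(n,r)$ queries, it remains to identify $\underline v$ among the $\sum_{i=0}^r\binom{n}{i}$ vertices of this ball using radius-$r$ balls only. I would show that the radius-$r$ balls \emph{separate} every pair of vertices whenever $1\le r\le n-1$: given $\underline a\ne\underline b$ at distance $\delta$, a centre that agrees with $\underline a$ on the $\delta$ differing coordinates and differs from both on $t$ of the remaining coordinates has $d(\underline c,\underline a)=t$ and $d(\underline c,\underline b)=\delta+t$, and any $t$ with $r-\delta<t\le r$ (which exists for $1\le r\le n-1$) gives a radius-$r$ ball containing exactly one of $\underline a,\underline b$. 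A separating family identifies an unknown element of an $N$-element set in at most $N-1$ adaptive queries (each query removes at least one candidate), so $B(\underline c,r)$ is resolved in at most $\sum_{i=0}^r\binom{n}{i}-1$ further queries, yielding (iii).

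\emph{Asymptotics.} Both refinements compare the additive error with $K(n,r)$ via the sphere-covering bound $K(n,r)\ge 2^n/\sum_{i=0}^r\binom{n}{i}$. For (ii) the error is $O(n)$, so it suffices that $K(n,r)=\omega(n)$; writing $\sum_{i=0}^r\binom{n}{i}=2^n\Pr[\mathrm{Bin}(n,1/2)\le r]$, a Chernoff bound gives $\Pr[\mathrm{Bin}(n,1/2)\le r]\le e^{-2(C\sqrt n\log n)^2/n}=e^{-2C^2\log^2 n}=o(1/n)$ when $r\le n/2-C\sqrt n\log n$, hence $K(n,r)\ge 1/\Pr[\cdots]=\omega(n)$. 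For (iii) the error is the ball volume $V:=\sum_{i=0}^r\binom{n}{i}$, and $V/K(n,r)\le V^2/2^n$; by the entropy estimate $V=2^{(H(r/n)+o(1))n}$ (with $H$ the binary entropy function) this tends to $0$ as soon as $H(r/n)<1/2$, which holds for $r\le C'n$ with $C'$ any constant below the root $x_0\approx0.11$ of $H(x)=1/2$. In both regimes the additive term is $o(K(n,r))$, giving $A=(1+o(1))K(n,r)$. The only genuinely delicate step is the separation claim in (iii); the remainder is bookkeeping around the covering code and standard binomial tail estimates.
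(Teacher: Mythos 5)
Your proof is correct and follows essentially the same route as the paper: the same information-theoretic and covering-plus-one-uncovered-vertex lower bounds, the same weight-binary-search followed by coordinate-revealing balls $B(\underline{e}_i,w-1)$ for the upper bounds in (i) and (ii), and the same sphere-covering comparison for the asymptotics. If anything, you supply two details the paper leaves implicit: the explicit verification that radius-$r$ balls separate every pair of vertices for $1\le r\le n-1$ (which is what justifies the paper's ``eliminate one vertex per further query'' step in (iii)), and the entropy estimate showing $C'$ can be taken as any constant with $H(C')<1/2$, whereas the paper's stated justification only addresses $r=o(n)$.
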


In Section 2, we will also study $M(Q_n)$ for small values of $n$. 

Then we turn our attention to the Erd\H os-R\'enyi random graph model $G(n,p)$. The asymptotics of $i^{(1)}_1(G(n,p))$ was determined by Frieze, Martin, Moncel, Ruszink\'o, and Smyth \cite{FMMRS} provided $p,1-p\ge 4 \log\log n/\log n$ holds.
Using results of Bollob\'as \cite{B} and Katona \cite{K} we obtain upper and lower bounds on $M(G(n,p),r)$ that differ by an $O(\ln n)$-factor  for any $r$ that is smaller than the diameter of $G(n,p)$ whenever
 $p=\Omega(n^{-1+\varepsilon})$. As the formulation of our theorem is somewhat technical, we defer its statement until \sref{rand}.

Finally, we consider graphs of bounded maximum degree. Results concerning the parameter $i^{(1)}_1(G)$ for such graphs, for random $d$-regular graphs and for some other graph classes defined by their degree sequences were obtained by Foucaud and Perarnau \cite{FP}. Let us denote by $\cG_{\Delta,n}$ the set of finite graphs on $n$ vertices with maximum degree at most $\Delta$.

\begin{theorem} \label{thm:maxdeg} For all connected graphs $G \in \cG_{\Delta,n}$ we have $A(G) = \Theta_\Delta(\log n)$.
\end{theorem}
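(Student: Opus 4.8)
The lower bound is the information-theoretic one and holds for \emph{every} graph on $n$ vertices, with no dependence on $\Delta$: an adaptive strategy using $k$ queries is a binary decision tree of depth $k$, hence has at most $2^k$ leaves, and to be able to identify all $n$ vertices distinct vertices must reach distinct leaves. Thus $2^k\ge n$, giving $A(G)\ge\lceil\log_2 n\rceil=\Omega(\log n)$. It remains to prove the matching upper bound $A(G)=O_\Delta(\log n)$, where the dependence on $\Delta$ lives in the constant.

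For the upper bound I plan to run a halving search. The searcher maintains a candidate set $S\subseteq V(G)$, initially $V(G)$, and I will show that a single well-chosen ball query always shrinks $S$ by a factor bounded away from $1$ by a quantity depending only on $\Delta$. Since the searcher knows both $G$ and the current set $S$, selecting the ball costs no queries, so the entire content is the following \emph{balanced ball} statement: if $G$ is connected with maximum degree $\Delta\ge 2$ and $S\subseteq V(G)$ has $|S|=m\ge 3\Delta$, then there are $c\in V(G)$ and a radius $\rho$ with $\tfrac{m}{3\Delta}\le |B_\rho(c)\cap S|\le(1-\tfrac1\Delta)m$. Granting this, querying such a ball leaves a candidate set of size at most $(1-\tfrac1{3\Delta})m$ in both the ``yes'' and the ``no'' branch, so after $O(\Delta\log n)$ rounds we reach $|S|\le 3\Delta$, and a candidate set of that size is finished off by at most $3\Delta=O_\Delta(1)$ further radius-$0$ queries. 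This gives $A(G)\le O(\Delta\log n)=O_\Delta(\log n)$, and combined with the lower bound yields $A(G)=\Theta_\Delta(\log n)$.

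The heart of the matter is the balanced ball statement, which I would prove by a descent down a hierarchy of balls. The structural input, and precisely where bounded degree enters, is the covering inclusion $B_{\rho+1}(x)\subseteq\{x\}\cup\bigcup_{b\sim x}B_\rho(b)$: a ball of radius $\rho+1$ is contained in the union of the at most $\Delta$ radius-$\rho$ balls centered at the neighbors of $x$, together with $x$ itself. Consequently, for any $x$ some neighbor $b^\ast$ satisfies $|B_\rho(b^\ast)\cap S|\ge\tfrac1\Delta\bigl(|B_{\rho+1}(x)\cap S|-1\bigr)$. Starting from $B_D(s_0)=V(G)\supseteq S$, where $D$ is the diameter and $s_0$ is arbitrary, and repeatedly stepping to such a heaviest neighbor while decreasing the radius by one, I obtain a chain of balls whose candidate-counts descend from $m$ down to at most $1$ and drop by at most a factor $\Delta$ (up to the additive $1$) at each step. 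Connectivity guarantees that these counts genuinely sweep the whole range. The first ball in the chain whose count falls to at most $(1-\tfrac1\Delta)m$ therefore still captures at least $\tfrac1\Delta\bigl((1-\tfrac1\Delta)m-1\bigr)\ge\tfrac{m}{3\Delta}$ candidates (using $m\ge 3\Delta$, $\Delta\ge2$), which is exactly the balanced ball sought.

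The step I expect to demand the most care is verifying that the per-step multiplicative drop really is bounded by $\Delta$ in the presence of a \emph{heavy sphere}, that is, when nearly all of $S$ lies at a single distance from the current center. This configuration, realized for instance by the leaf set of a complete $\Delta$-ary tree, is precisely what defeats the naive strategy of querying concentric balls around one fixed vertex, since there a single radius increment can jump the candidate count from almost none to almost all. The covering inclusion is what rescues this case: a sphere that is heavy around $x$ must distribute its mass among the at most $\Delta$ neighbor-balls, so recentering at the heaviest neighbor retains a $\tfrac1\Delta$-fraction, and iterating this recentering is exactly the descent above. Once the multiplicative-drop bound is secured, the remaining bookkeeping — the additive $+1$ terms, the small-$m$ endgame, and the exact constant hidden in $\Theta_\Delta$ — is routine.
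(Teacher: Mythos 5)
Your proposal is correct and takes essentially the same route as the paper: the identical information-theoretic lower bound, and an upper bound by a halving search driven by a balanced-ball lemma (the paper's Lemma 4.1, with constants $\frac{1}{\Delta+2}$ and $\frac{\Delta+1}{\Delta+2}$ where you have $\frac{1}{3\Delta}$ and $1-\frac{1}{\Delta}$), with bounded degree entering in both cases through the covering inclusion $B(w,r)\subseteq \{w\}\cup\bigcup_{u\sim w}B(u,r-1)$. The only difference is organizational: the paper finds the balanced ball by choosing a vertex minimizing the radius needed to capture a $\frac{\Delta+1}{\Delta+2}$-fraction of $X$ and deriving a contradiction from the covering inclusion, while you run an explicit descent from a spanning ball down to radius $0$, recentering at the heaviest neighbor, and take the first threshold crossing --- two correct bookkeepings of the same idea, and your first-crossing estimate $\frac{1}{\Delta}\bigl((1-\frac{1}{\Delta})m-1\bigr)\ge\frac{m}{3\Delta}$ checks out for $m\ge 3\Delta$, $\Delta\ge 2$.
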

Notice that the connectivity assumption cannot be ommited in the above theorem. Indeed, otherwise $G$ could be a union of disjoint cliques on $\Delta+1$ vertices. A search in this graph must ask at least one query in each component, and hence use at least $|G|/(\Delta+1)= O_{Delta}(n)$ queries.
Similarly, the bound on the maximal degree of $G$ cannot be removed since an optimal search on a star with $n$ vertices needs  $n-1$ queries.
Finally, note that Theorem~\ref{thm:maxdeg} cannot be strengthened to give a logarithmic bound on the length of a  non-adaptive search on bounded degree graphs. Indeed it is easy to show that $M(P_n)=\lceil \frac{n-1}{2}\rceil$, and so even for graphs with $\Delta=2$ , the nonadaptive search may need linearly many queries.

Let us observe the connection of non-adaptive ball search to another well studied parameter of graphs. A set $D$ of vertices of $G$ is called a \textit{resolving set} of $G$ if for any vertices $x,y \in V(G)$ there exists $z \in D$ such that $d(x,z)\neq d(y,z)$. The minimum size of a resolving set is called the \textit{metric dimension of $G$} (introduced in \cite{HM,S}) and is denoted by $\beta(G)$. Clearly, if a set of ball queries finds an unknown vertex $v$, then the centers of the balls form a resolving set in $G$ and thus we have $\beta(G) \le M(G)$. Also, if $D$ is a resolving set in $G$, then the query set $\{B_G(d,i): d \in D, 1\le i \le diam(G)\}$ finds the unknown vertex of $G$, where $diam(G)$ denotes the diameter of $G$. Consequently, we have $M(G) \le diam(G)\beta(G)$.

Throughout the paper ``$\log$'' stands for the logarithm of base 2, and ``$\ln$'' denotes the natural logarithm. 

\section{The hypercube}
\label{sec:hyp}

In this section we consider the adaptive search on $Q_n$. We find bounds on $A(Q_n)$ and $A(Q_n, \leq r)$.
We also find a code in $Q_7$ of balls of radius 3 that is in some sense better than the best previously known one. We start by proving \tref{main}.

Throughout this section we will denote the number of vertices in a ball of radius $r$ in $Q_n$ by $V(n,r)=\sum_{i=0}^r \binom nr$. Recall that that the following bound on $V(v,r)$ follows from the Chernoff Bound (see \cite{AS}) $|V(n,r)|\leq 2^n \exp(-(n-2r)/\sqrt{n})$.
\begin{proof}[Proof of \tref{main}:]
\begin{enumerate}[(i)]
\item
For the lower bound notice that the sequence of answers must be different for each vertex. Therefore there must be at least $\log(|Q_n|)=n$ questions.

For  the upper bound, let  $\underline{u}^*$ be the unknown element of $Q_n$.  We give an adaptive algorithm that finds  $\underline{u}^*$ and uses at most $n-1+ \left\lceil \log(n+1)\right\rceil$ queries.

The first part of our algorithm uses $\left\lceil \log(n+1)\right\rceil$ queries to determine the distance $d(\underline{0},\underline{u}^*)$. Notice that asking a ball $B(\underline{0}, r)$
is equivalent to asking whether we have $d(\underline{0},\underline{u}^*)\leq r$ or $d(\underline{0},\underline{u}^*)>r$. Note that $d(\underline{0},\underline{u}^*)>r$ can be anything in the set $\{0,1,\ldots n\}$. In the first query we ask the ball $B(\underline{0}, \lfloor n/2 \rfloor)$. Depending on whether we get a ``yes'' or ``no'' answer, we obtain that $d(\underline{0},\underline{u}^*)$ is either in $\{0,\ldots \lfloor n/2 \rfloor\}$ or in $\{\lfloor n/2 \rfloor+1, \dots,n\}$. Continuing in this fashion, always asking  $B(\underline{0}, r)$ where $r$ is the median of the possible (not already excluded) distance values, we see that  $\left\lceil \log(n+1)\right\rceil$ determine $d(\underline{0},\underline{u}^*)$.  Let us denote this distance by $d$.

To finish the algorithm we show that for every $1 \le i \le n$ one further query is enough to determine the value of $\underline{u}^*$'s $i$th coordinate. Indeed, $\underline{u}^* \in  B(\underline{e}_i, d-1)$ holds if and only if the $i$th coordinate of $\underline{u}^*$ is $1$. We ask the balls $B(\underline{e}_i, d-1)$ for $i = 1, \dots, n-1$ in order to determine $\underline{u}^*_1, \dots, \underline{u}^*_{n-1}$. Since $\underline{u}^*_n=d-\sum_{i=1}^{n-1} \underline{u}^*_i$, this is enough to determine~$\underline{u}^*$.

\item
For the lower bound, observe that if after asking balls $B_1,B_2,\dots,B_t$ we are able to determine the unknown vertex, then at most one vertex is not covered by the union of the balls.

For the upper bound, consider the following algorithm. First, let us ask a set of balls $\cB$ of radius $r$ that cover all vertices and with $|\cB|=K(n,r)$. After finding a ball $B \in \cB$ containing the marked vertex, we repeat the algorithm of part \textbf{(i)} with the modification that we know that the marked vertex is of Hamming distance at most $r$ from the center of $B$.  Therefore, to determine this distance exactly, we only need $\lceil\log (r+1)\rceil$ queries.

We now show that  there is a constant $C$ such that if $r\le n/2-C\sqrt{n}\log n$, then $A(Q_n,\leq r)=(1+o(1))K(n,r)$. The exact value of $K(n,r)$ is known
only for a short range of $n$ and $r$ and even determining the asymptotics of $K(n,r)$ is one of the major open problems of coding theory. For an introduction to covering codes, see the book \cite{CHLL}. Clearly $K(n,r) \ge 2^n/V(n,r)$ holds. It is known that for any fixed $r$, we have $K(n,r)=\Theta(2^n/V(n,r))$ (the best known constant is due to Krivelevich, Sudakov and Vu \cite{KSV}) and that for any function $r=r(n)$ we have $K(n,r)=O(n2^n/V(n,r))$.

By the above reasoning we obtained that to prove $A(Q_n,\leq r)=(1+o(1))K(n,r)$, it is sufficient to show that $n=o(2^n/V(n,r))$ holds. This holds since we have $2^n/V(n,r)\geq \exp((n-2r)/\sqrt{n})$ and $r\le n/2-C\sqrt{n}\log n$.

\item
The lower bound follows from part (ii). For the upper bound, proceed as we did in part (ii) to use $K(n,r)$ queries to find one ball of radius $r$ containing the marked vertex. After that we can eliminate one possible vertex by each further query thus finding $\Vect{u^*}$ with $K(n,r)+V(n,r)$ queries.

 The second part follows from the fact that if $r=r(n)=c_nn$ with $c_n\rightarrow 0$, then $\log V(n,r)/n \rightarrow 0$.
\end{enumerate}
\end{proof}



We finish this section with considering the special case $n=7$. There are many results on $i_r^{(1)}(Q_n)$ for small values of $r$ and $n$. In \cite{CCHL}, Charon, Cohen, Hudry, and Lobstein proved that $i^{(1)}_3(Q_7)=8$. This is true, but the lower bound is due to the fact that the definition of identifying code includes the condition that balls around the codewords should cover all vertices of the graph. On the other hand, no 7 of their 8 codewords possess the separation property. This can be checked easily as if they did, then for any two of them $B(\Vect{c_1},3)\cap B(\Vect{c_2},3),
B(\Vect{c_1},3)\setminus B(\Vect{c_2},3), B(\Vect{c_2},3)\setminus B(\Vect{c_1},3), V(Q_7) \setminus (B(\Vect{c_1},3)\cup B(\Vect{c_2},3))$ should have size 32 and consequently those 7 codewords should have pairwise Hamming distance 4. In the remainder of this section we show a construction of seven codewords with the separation property.

\begin{lemma}\label{Fano}
There exist vertices $\Vect{v_1}, \Vect{v_2},...,\Vect{v_7} \in V(Q_7)$ such that the balls $\{B(\Vect{v_i},3), i=1,2,\ldots,7\}$ separate $V(Q_7)$.
\end{lemma}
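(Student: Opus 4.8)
The plan is to give an explicit construction based on the Fano plane $\Fano$ and to recast separation as the statement that a single signature map is a bijection. Identify the seven coordinates of $Q_7$ with the seven points of $\Fano$, so that a vertex of $Q_7$ is the characteristic vector of a point set $S \subseteq \Fano$, and take $\Vect{v_1},\dots,\Vect{v_7}$ to be the characteristic vectors of the seven lines $L_1,\dots,L_7$ of $\Fano$. For a vertex $S$ define its signature $\sigma(S)\in\{0,1\}^7$ by $\sigma(S)_i=1$ iff $S\in B(\Vect{v_i},3)$. The balls separate $V(Q_7)$ precisely when $\sigma$ is injective; since $|V(Q_7)|=2^7=128$ equals the number of possible signatures, and since each ball of radius $3$ contains exactly $V(7,3)=64$ vertices (so each coordinate of $\sigma$ is balanced), it is equivalent, and cleanest, to prove that $\sigma$ is a \emph{bijection}.

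First I would record the elementary distance computation $|S\triangle L_i|=|S|+3-2|S\cap L_i|$, which gives $\sigma(S)_i=1\iff |S\cap L_i|\ge |S|/2$. Thus the signature of $S$ marks exactly those lines meeting $S$ in at least half of its points, and the whole analysis splits naturally according to $k=|S|$, to be read off from the incidence structure of $\Fano$ (each point lies on three lines, each two points lie on a unique line, each two lines meet in a point).

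The core is a case analysis on $k$. For $k\in\{0,7\}$ the signature is constant ($1^7$ or $0^7$). For $k=1$ (a point $p$) the signature marks the pencil of three lines through $p$; dually, for $k=6$ it marks the four lines missing $p$. For $k=2$ and $k=5$ one uses that a pair $\{p,q\}$ is avoided by exactly two lines, and that these two lines recover $\{p,q\}$ as the pair of points lying on neither, giving injectivity and $\binom 72=21$ signatures of weight $5$ and $2$ respectively. The delicate cases are $k=3$ and $k=4$, where a $3$-subset is either a \emph{line} or a \emph{triangle}: a line yields a weight-$1$ (resp.\ weight-$6$) signature, while a triangle yields a weight-$3$ (resp.\ weight-$4$) signature whose support is its set of three \emph{sides}.

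The main obstacle, and the point where the Fano structure is essential, is ruling out collisions at weights $3$ and $4$, where pencils and triangles both produce signatures of the same Hamming weight. Here I would invoke the dichotomy that any three lines of $\Fano$ either are concurrent (a pencil) or pairwise meet in three distinct points (a triangle of lines), and that these two configurations are never the same set of lines. A pencil arises only from a singleton ($k=1$), and a triangle of lines only as the side-set of a unique triangle ($k=3$), so the weight-$3$ signatures split as $7+28=35=\binom 73$ with no overlap; the complementary computation handles weight $4$. Tallying all cases yields $1+7+21+35+35+21+7+1=128$ distinct signatures, so $\sigma$ is a bijection and the seven balls $\{B(\Vect{v_i},3)\}$ separate $V(Q_7)$.
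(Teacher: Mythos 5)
Your proposal is correct and follows essentially the same route as the paper: the identical construction (balls centered at the characteristic vectors of the seven Fano lines), the same case analysis on the support size of the unknown vertex, the same antipodal complementation trick (your ``dually''/``complementary computation'' is the paper's observation that $d(\Vect{u},\Vect{v})+d(\Vect{\overline{u}},\Vect{v})=7$), and the same crucial point that a pencil of three concurrent lines is never the side-set of a triangle. Your only cosmetic difference is packaging the injectivity check as an exact count $1+7+21+35+35+21+7+1=128$ showing the signature map is a bijection, which is a pleasant strengthening but not a different argument.
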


\begin{proof}

Consider the incidence matrix of the Fano plane $PG(2,2)$ consisting of $7$ rows and columns corresponding to its line and point set, see Figure 1.
Let $v_i$ be the $i$th row corresponding to the $i$th line in the finite geometry. We show that the following query set provides a separating ball system, implying the lemma:

\[ \{B(\underline{v_i}, 3): i=1\ldots 7  \} .\]

\begin{figure}[h!]\label{fanni}
\begin{center}
\includegraphics[width=3cm]{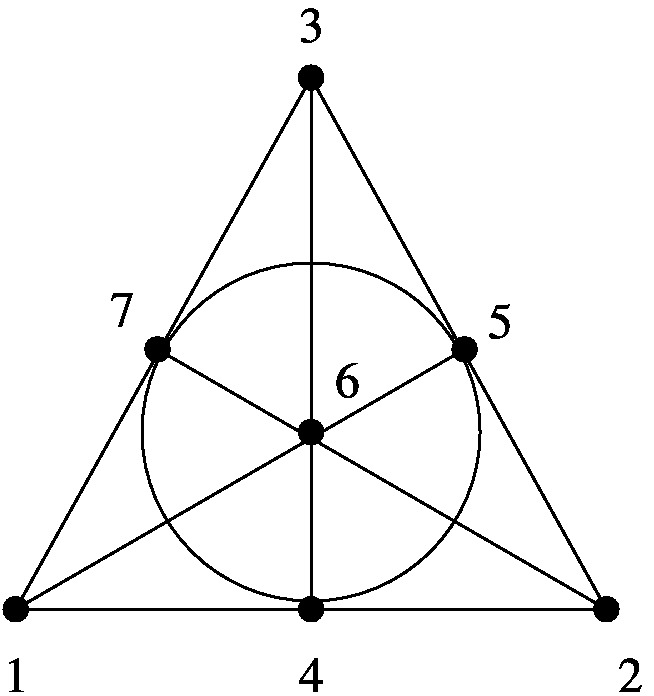}
  \caption{Fano plane of order $2$}
\end{center}
\end{figure}

Let $\underline{u^*}$ denote the marked element in $Q_7$ and $S(\underline{u^*})$ denote the subset of the set of balls $\{B(\underline{v_i}, 3): i=1\ldots 7  \}$ for which $\underline{u^*}\in B(\underline{v_i}, 3)$. We show that any subset $S$ is squarely determined by the choice of $\underline{u^*}$. To this end, we study the connection between the support of  $\underline{u^*}$ and the cardinality of $S(\underline{u^*})$.

\begin{obs}\label{obsi}.
\begin{itemize}

\item[(0)]  $|supp(\underline{u^*})|=0 \rightarrow |S(\underline{u^*})|=7$.
\item[(1)]  $|supp(\underline{u^*})|=1 \rightarrow |S(\underline{u^*})|=3$, and $S(\underline{u^*})$ consists of the balls, whose centers' support contains $supp(\underline{u^*})$.
\item[(2)] $|supp(\underline{u^*})|=2 \rightarrow |S(\underline{u^*})|=5$,  and $S(\underline{u^*})$ consists of the balls, whose centers' support intersects  $supp(\underline{u^*})$.
\item[(3)] $|supp(\underline{u^*})|=3$ and $supp(\underline{u^*})$ corresponds to a line in the Fano plane, i.e.  $\underline{u^*}=v_j$  $\rightarrow  |S(\underline{u^*})|=1$  and $S(\underline{u^*})=\{B(\underline{v_j}, 3)\}$.
\item[(3')] $|supp(\underline{u^*})|=3$ and $supp(\underline{u^*})$ does not correspond to any line in the Fano plane, $\rightarrow  |S(\underline{u^*})|=3$  and  $S(\underline{u^*})$ consists of the balls, whose centers' support intersection with  $supp(\underline{u^*})$ has cardinality $2$.  (Hence, the intersection of the supports of all these $3$ centers is the empty set.)
\end{itemize}
\end{obs}

\begin{claim} \label{sum7}
$S(\underline{u^*}) \dot\bigcup S(\underline{\overline{u^*}})= \{B(\underline{v_i}, 3): i=1\ldots 7  \}$.
\end{claim}

\begin{proof} $d(\underline{\overline{u}}, \underline{v})+d(\underline{{u}},\underline{v})=7$ for all $\underline{u}, \underline{v} \in Q_7$, thus exactly one of the summands is less then or equal to $3$.
\end{proof}

\begin{cor}\label{even}
$|supp(\underline{u^*})|>3 \rightarrow |S(\underline{u})|$ is even.
\end{cor}

To complete the proof of Lemma \ref{Fano}, it is enough to verify that no set $S\subseteq \{B(\underline{v_i}, 3): i=1\ldots 7  \}$ , $|S|$  odd, is assigned
 to different elements of  $Q_7$. Indeed, the other case $S\subseteq \{B(\underline{v_i}, 3): i=1\ldots 7  \}$ , $|S|$  even  will follow from this in view of Claim  \ref{sum7}, since $S(\underline{u^*})=S(\underline{u'})$ is equivalent to $S(\underline{\overline{u^*}})=S(\underline{\overline{u'}})$, where $|S(\underline{\overline{u^*}})|=7-|S(\underline{u^*})|$ is odd if $|S(\underline{u^*})|$ is even.\\
 But this verification is straightforward according to the case analysis of Observation \ref{obsi}, as it is easy to see how the sets $S\subseteq \{B(\underline{v_i}, 3): i=1\ldots 7  \}$  are determined accordingly in the cases $|S|=1, 3, 5$ and $7$.

\end{proof}

\section{The Erd\H os-R\'enyi random graph}
\label{sec:rand}

In this section we consider the Erd\H os-R\'enyi random graph $G(n,p)$. By $G(n,p)$ we mean the probability space
of all labeled graphs on $n$ vertices, where every edge appears
randomly and independently with probability $p=p(n)$. We say that
$G(n,p)$ possesses a property $\mathcal{P}$ \textit{asymptotically almost
surely}, or a.a.s. for brevity,
if the probability that $G(n,p)$ satisfies $\mathcal{P}$ tends to 1
as $n$ tends to infinity. We will find bounds for the quantities $M(G(n,p), r)$, $M(G(n,p), \leq r)$, and $M(G(n,p))$ which hold asymptotically almost surely. All our searches will use balls of equal radius, so the bounds we find for $M(G(n,p), \leq r)$ will hold for $M(G(n,p), r)$ as well.

Before we start to investigate the parameter $M(G(n,p),\leq r)$ generally, let us consider the special case when $r=1$. In this case the asymptotics of $M(G(n,p),\leq r)$ follow immediately from a result of Frieze, Martin, Moncel, Ruszink\'o, and Smyth \cite{FMMRS}. As we mentioned in the introduction, for any graph $G$ we always have $M(G,1) \le i^{(1)}_1(G)$.  The parameter $i^{(1)}_1(G(n,p))$ was studied by Frieze, Martin, Moncel, Ruszink\'o, and Smyth  who proved the following theorem.

\begin{theorem}[Frieze, Martin, Moncel, Ruszink\'o, Smyth \cite{FMMRS}]
\label{thm:fr} Let $q=p^2+(1-p)^2$ and assume $p, 1-p \ge \frac{\log \log n}{\log n}$. Then asymptotically almost surely $i^{(1)}_1(G(n,p))=(1+o(1))\frac{2 \log n}{\log (1/q)}$ holds.
\end{theorem}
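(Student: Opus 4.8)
The statement is the result of Frieze, Martin, Moncel, Ruszink\'o and Smyth, so the plan is to reconstruct a proof rather than to invent a new one. The natural starting point is to reformulate the condition ``$C$ is a $(1,\le 1)$-identifying code'' in purely probabilistic terms. Assign to each vertex $v$ its \emph{signature} $\sigma_C(v):=B_1(v)\cap C$; then $C$ is an identifying code precisely when the map $v\mapsto \sigma_C(v)$ is injective and never empty. The crucial observation is that for a vertex $v\notin C$ the signature $\sigma_C(v)$ depends only on the $|C|$ edges joining $v$ to $C$, which are independent $\mathrm{Bernoulli}(p)$ variables; moreover the signatures of distinct vertices outside $C$ depend on disjoint edge sets and are therefore mutually independent. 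For two such vertices $u,v$ a single code vertex $w$ fails to separate them exactly when $w$ is joined to both or to neither of $u,v$, which happens with probability $p^2+(1-p)^2=q$; hence $\mathbb{P}(\sigma_C(u)=\sigma_C(v))=q^{|C|}$. This exhibits $\binom{n}{2}q^{|C|}$ as the relevant first moment and turns the whole problem into a birthday problem in disguise, with threshold $c^\ast:=\frac{2\log n}{\log(1/q)}$, the exponent $2$ reflecting the $\binom{n}{2}$ pairs.

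For the upper bound I would use the vertex transitivity of $G(n,p)$ and simply take $C=\{1,\dots,c\}$ with $c=(1+\varepsilon)c^\ast$. The expected number of colliding pairs among vertices outside $C$ is $\binom{n}{2}q^{c}=\Theta(n^{-2\varepsilon})\to 0$, so Markov's inequality gives that a.a.s.\ all these signatures are distinct. It then remains to dispose of two lower-order requirements: that every vertex has a nonempty signature (domination) and that pairs incident to $C$ are separated. Domination follows because $q\ge (1-p)^2$ forces $(1-p)^{c}\le n^{-(1+\varepsilon)}$, so a.a.s.\ every vertex has a code neighbour; and the pairs meeting $C$ number only $O(cn)$, a negligible fraction, so their expected contribution to the number of unseparated pairs is also $o(1)$. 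This yields $i^{(1)}_1(G(n,p))\le (1+o(1))c^\ast$ a.a.s.

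For the lower bound I would fix $c=(1-\varepsilon)c^\ast$ and prove that a.a.s.\ \emph{every} $c$-set $C$ leaves some pair unseparated, via the union bound $\mathbb{P}(\exists\text{ valid }C)\le \binom{n}{c}\,\mathbb{P}(\text{a fixed }C\text{ separates all pairs})$. Since $\binom{n}{c}\le n^{c}=\exp\!\big(O(\log^2 n/\log(1/q))\big)$ is only quasi-polynomial, I need an exponentially small bound $\mathbb{P}(\text{no collision among the signatures outside }C)\le \exp(-\Omega(n^2 q^{c}))$; a mere second-moment estimate, which decays only polynomially, will not suffice. The tool for this is Janson's inequality \cite{AS} applied to the bad events $A_{uv}=\{\sigma_C(u)=\sigma_C(v)\}$ over pairs $u,v\notin C$: here $E[X]=\Theta(n^2 q^{c})$, while the dependency sum $\Delta$ comes from triples of vertices whose three signatures all coincide, contributing $\Delta=\Theta\!\big(n^3 q_3^{\,c}\big)$ with $q_3:=p^3+(1-p)^3=q-p(1-p)$. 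Janson then gives $\mathbb{P}(X=0)\le \exp\!\big(-E[X]+\Delta\big)$, which is of the required strength provided $\Delta=o(E[X])$, i.e.\ provided $n\,(q_3/q)^{c}\to 0$.

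The main obstacle is precisely this last inequality, and it is here that the hypothesis $p,\,1-p\ge \log\log n/\log n$ enters. Writing $q_3/q=1-p(1-p)/q$, the condition $n(q_3/q)^{c}\to 0$ amounts to $c\,\log\!\big(q/q_3\big)>\log n$, and since $c\asymp \log n/\log(1/q)$ this is a constraint comparing $p(1-p)$ with $q\log(1/q)$ that degrades exactly as $p\to 0$ or $p\to 1$. The lower bound on $p$ is calibrated to keep the triple-collision term $\Delta$ subdominant to the pair-collision term $E[X]$ in this regime; once that is secured, $\binom{n}{c}\exp(-\Omega(n^2 q^{c}))=\exp\!\big(O(\mathrm{polylog}\,n)-\Omega(n^{2\varepsilon})\big)\to 0$ closes the union bound, and letting $\varepsilon\to 0$ matches the upper bound to give the stated asymptotics. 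Verifying the Janson hypotheses uniformly over the whole admissible range of $p$ — including the subtle boundary behaviour controlled by $\log\log n/\log n$ — is the technical heart of the argument.
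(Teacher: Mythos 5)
First, a point of order: the paper does not prove this statement at all --- Theorem~\ref{thm:fr} is quoted verbatim from \cite{FMMRS} as an external result, and the paper only builds a corollary on top of it. So your proposal can only be judged on its own merits as a reconstruction of the FMMRS argument. Your setup (i.i.d.\ signatures $\sigma_C(v)=B_1(v)\cap C$, collision probability $q^{|C|}$ per pair, the birthday threshold $c^\ast=2\log n/\log(1/q)$) and your upper bound (first moment for collisions among $V\setminus C$, plus the $o(1)$ contributions from domination via $(1-p)^c\le q^{c/2}$ and from the $O(cn)$ pairs meeting $C$) are sound, and your diagnosis that the lower bound needs an exponentially small per-code failure probability to survive the $\binom{n}{c}\le e^{c\log n}$ union bound is exactly right.

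The gap is in the lower bound, and it is precisely where you located ``the technical heart'' --- but your claimed resolution is false. Two problems. First, a repairable one: Janson's inequality in the form you cite (\cite{AS}) applies to increasing events of the form $\{S_i\subseteq R\}$ in independent Bernoulli variables; the collision events $A_{uv}=\{\sigma_C(u)=\sigma_C(v)\}$ are not monotone in the edge indicators, so you would need Suen's inequality (which needs only a dependency graph) instead --- the shape $\mathbb{P}(X=0)\le\exp(-\lambda+\Delta e^{2\delta})$ survives this substitution. Second, the fatal one: your condition $\Delta=o(\mathbb{E}[X])$, i.e.\ $n(q_3/q)^c\to 0$, simply fails whenever $p=o(1)$, and the hypothesis $p\ge\log\log n/\log n$ cannot rescue it. Indeed $q-q_3=p(1-p)$, so as $p\to 0$ one has $\log(1/q)\sim 2p$ while $\log(q/q_3)\sim p$, whence $c\log(q/q_3)=2(1-\varepsilon)\log n\cdot\frac{\log(q/q_3)}{\log(1/q)}\to(1-\varepsilon)\log n$ and $n(q_3/q)^c=n^{\varepsilon+o(1)}\to\infty$. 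Concretely, $\lambda=\Theta(n^2q^c)=n^{2\varepsilon}$ but $\Delta=\Theta(n^3q_3^c)=n^{3\varepsilon+o(1)}$, so the Suen/Janson exponent $-\lambda+\Delta e^{2\delta}$ is positive and the bound is vacuous throughout the sparse regime the theorem is meant to cover; the ratio $\log(q/q_3)/\log(1/q)\to 1/2$ for \emph{every} $p=o(1)$, however slowly $p$ decays, so no calibration of the lower bound on $p$ makes the triple term subdominant. The argument can be repaired --- e.g.\ sparsify the pair family by partitioning $V\setminus C$ into roughly $n^{3\varepsilon/2}$ groups and counting only within-group collisions, which scales $\lambda$ down to $n^{\varepsilon/2}$ (still overwhelming $c\log n=O(\log^3 n/\log\log n)$) while driving $\Delta/\lambda=n^{\varepsilon+o(1)}/n^{3\varepsilon/2}\to 0$ --- but as written your proof does not establish the lower bound for any $p=o(1)$.
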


It is quite natural to think that apart from some exceptional graphs, querying balls of radius 0 (i.e. asking whether the center is the unknown vertex or not) is not efficient and therefore $i^{(1)}_1(G)=M(G,\leq 1)$ or at least $i^{(1)}_1(G)=(1-o(1))M(G,\leq 1)$ holds. It is indeed the case for $G(n,p)$ and the proof of Lemma 2 in \cite{FMMRS} suffices. Frieze, Martin, Moncel, Ruszink\'o, and Smyth prove an upper bound on the probability that there exists an identifying code $C$ of size $\frac{(2-\varepsilon) \log n}{\log (1/q)}$ by considering the probability that there exists a pair of vertices in $V(G(n,p)) \setminus C$ that is not separated by the closed neighborhoods (i.e. 1-balls) of vertices of $C$. Since the probability that a set $B_1$ of 1-balls and a set $B_0$ of 0-balls together form a separating system is not larger than the probability that the closed neighborhoods of the centers $C$ of the balls in $B_1 \cup B_0$ separate all pairs of vertices in $V(G(n,p))\setminus C$, we obtain the following corollary of \tref{fr}.

\begin{cor}
Let $q=p^2+(1-p)^2$ and assume $p, 1-p \ge \frac{\log \log n}{\log n}$. Then almost surely $M(G(n,p),1)=(1+o(1))\frac{2 \log n}{\log (1/q)}$ holds.
\end{cor}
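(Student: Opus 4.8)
The plan is to derive this as a direct corollary of Theorem~\ref{thm:fr}, exactly as the surrounding text suggests. The quantity $M(G(n,p),1)$ concerns separating systems built from balls of radius $0$ and $1$, whereas $i^{(1)}_1(G(n,p))$ concerns separating systems built only from $1$-balls that also cover all vertices. Since $M(G,1)\le i^{(1)}_1(G)$ holds for every graph (as noted in the excerpt), the upper bound $M(G(n,p),1)\le(1+o(1))\frac{2\log n}{\log(1/q)}$ is immediate from Theorem~\ref{thm:fr}. So the entire content of the corollary is the matching lower bound: I must show that almost surely no separating ball system using radius-$0$ and radius-$1$ balls can have size smaller than $(1-o(1))\frac{2\log n}{\log(1/q)}$.

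The key step is to argue that allowing $0$-balls in addition to $1$-balls does not asymptotically help, and that the coverage requirement in the definition of $i^{(1)}_1$ is not what drives the lower bound. First I would follow the proof of Lemma~2 in \cite{FMMRS}: their lower bound on $i^{(1)}_1(G(n,p))$ comes from a first-moment/union-bound estimate on the probability that a candidate code $C$ of size $\frac{(2-\varepsilon)\log n}{\log(1/q)}$ fails to separate \emph{some} pair of vertices lying outside $C$, using only the closed neighborhoods (the $1$-balls) of the vertices of $C$. The crucial observation is the monotonicity remark already stated in the excerpt: if a mixed set $B_1$ of $1$-balls together with a set $B_0$ of $0$-balls forms a separating system, then in particular the closed neighborhoods of the union $C$ of \emph{all} their centers separate every pair of vertices in $V(G(n,p))\setminus C$. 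The reason is that a $0$-ball $\{c\}$ separates a pair only if $c$ is in the pair, so a $0$-ball gives no separation among vertices of $V(G(n,p))\setminus C$; thus all separation among the outside vertices must already be achieved by the $1$-balls around the same center set.

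Consequently, the probability that there exists a mixed separating system of size $\frac{(2-\varepsilon)\log n}{\log(1/q)}$ is bounded above by the probability, estimated in \cite{FMMRS}, that there exists a vertex set $C$ of that size whose $1$-balls separate all pairs in $V(G(n,p))\setminus C$, and this probability tends to $0$. Here I would take care that the additive shift in $|C|$ caused by counting centers of both the $0$-balls and the $1$-balls is absorbed into the $o(1)$ term, since $\frac{(2-\varepsilon)\log n}{\log(1/q)}$ differs from the true threshold only by a constant factor in the multiplicative $\varepsilon$. This yields the lower bound $M(G(n,p),1)\ge(1-o(1))\frac{2\log n}{\log(1/q)}$ almost surely, which combined with the upper bound gives the claimed asymptotics.

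The main obstacle, and the only place where care is genuinely required, is the reduction step: verifying cleanly that mixing in $0$-balls cannot reduce the number of queries below the pure $1$-ball identifying-code threshold. The subtlety is that the excluded center set for a mixed system is the union of \emph{all} centers (of both $0$- and $1$-balls), which is slightly larger than the code in the pure setting, so I must confirm that the first-moment bound in \cite{FMMRS} is robust to enlarging the excluded set by a bounded number of vertices and still tends to zero for codes of size $(2-\varepsilon)\frac{\log n}{\log(1/q)}$. Once this robustness is confirmed, the rest is a transcription of the estimate from Theorem~\ref{thm:fr}.
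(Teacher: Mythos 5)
Your proposal is correct and takes essentially the same route as the paper: the upper bound via $M(G(n,p),1)\le i^{(1)}_1(G(n,p))$ combined with Theorem~\ref{thm:fr}, and the lower bound via the observation that a $0$-ball separates a pair only if its center belongs to the pair, so any mixed separating system of $0$- and $1$-balls yields a center set $C$ whose closed neighborhoods separate all pairs in $V(G(n,p))\setminus C$, to which the first-moment estimate of Lemma~2 in \cite{FMMRS} applies directly. The only remark worth making is that your closing worry is vacuous: the union of centers of a mixed system of $t$ queries has size \emph{at most} $t$ (it is never enlarged, since centers may coincide), and a smaller $C$ can simply be padded to the target size---padding only adds separating power and shrinks the set of excluded pairs---so no robustness check of the FMMRS bound is actually required.
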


Let us now consider the problem for general $r$. Note that all our constructions use balls of equal radius thus all our bounds will apply to $i^{(1)}_r(G(n,p))$ as well. To obtain our bounds on $M(G(n,p),\leq r)$ and $M(G(n,p))$ we will need two tools. The first tool is the following theorem of Katona.

\begin{theorem} [Katona \cite{K}]
\label{thm:katona} Let $X$ be an $M$-element set and $\cA \subseteq 2^X$ be a separating system of subsets of $X$ such that for all $A \in\cA$ we have $|A| \le m$ for some integer $m<M/2$. Then the following inequality holds
\[
|\cA| \ge \frac{M}{m}\frac{\log M}{\log \frac{eM}{m}}.
\]
\end{theorem}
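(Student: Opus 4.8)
The plan is to reformulate the separating property in terms of distinct membership vectors and then run a weight-counting argument. Write $t=|\cA|$ and enumerate the sets as $A_1,\dots,A_t$. To each element $x\in X$ associate its \emph{signature} $\sigma(x)\in\{0,1\}^t$, whose $j$th coordinate is $1$ exactly when $x\in A_j$. The separating hypothesis says precisely that $\sigma$ is injective, so the $M$ signatures are pairwise distinct binary vectors of length $t$. The decisive quantity is the \emph{weight} $w(x):=|\{j:x\in A_j\}|$, the number of $1$'s in $\sigma(x)$. Summing over $x$ and over sets in two ways gives $\sum_{x\in X}w(x)=\sum_{j=1}^t|A_j|\le mt$, since every $A_j$ has size at most $m$. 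Thus the signatures are $M$ distinct vectors whose total weight is small.

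Next I would exploit that vectors of small weight are few. Fix a threshold $w_0$ to be chosen later, and let $a:=|\{x:w(x)\le w_0\}|$. The $M-a$ elements of weight exceeding $w_0$ already contribute more than $(M-a)w_0$ to the weight sum, so $(M-a)w_0<mt$, giving $a>M-mt/w_0$. On the other hand, these $a$ elements have \emph{distinct} signatures of weight at most $w_0$, so $a$ is bounded by the number of such vectors:
\[
a\le\sum_{i=0}^{w_0}\binom{t}{i}\le\left(\frac{et}{w_0}\right)^{w_0},
\]
using the standard estimate for partial binomial sums. Combining the two bounds on $a$ yields $M-\frac{mt}{w_0}<\left(\frac{et}{w_0}\right)^{w_0}$, an inequality tying $M$, $m$, and $t$ together.

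The final step is to choose $w_0$ so that the error term $mt/w_0$ is a controlled fraction of $M$ and then solve for $t$. Taking $w_0$ of order $mt/M$ makes $et/w_0$ of order $eM/m$, so that $M\lesssim(eM/m)^{w_0}$; taking logarithms (base $2$) gives $\log M\lesssim\frac{mt}{M}\log\frac{eM}{m}$, which rearranges to the claimed bound $t\ge\frac{M}{m}\cdot\frac{\log M}{\log(eM/m)}$. The main obstacle is purely quantitative: the crude threshold choice $w_0=2mt/M$ produces the right shape but loses a constant factor and replaces $\log M$, $\log\frac{eM}{m}$ by $\log\frac{M}{2}$, $\log\frac{eM}{2m}$. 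Matching the stated clean constant requires optimizing $w_0$ more carefully (or estimating the partial binomial sum by its dominant term rather than the looser $(et/w_0)^{w_0}$), and verifying that the hypothesis $m<M/2$ keeps $w_0$ below $t$ so that all the binomial estimates remain valid.
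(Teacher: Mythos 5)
You should first note that the paper does not prove this statement at all: it is quoted as Theorem~\ref{thm:katona} with a citation to Katona's paper, so there is no in-paper proof to match your argument against; your attempt has to be judged against the stated inequality itself. Your setup (distinct signatures $\sigma(x)\in\{0,1\}^t$, total weight $\sum_x w(x)\le mt$) is correct, and your own diagnosis at the end is accurate: what you actually prove is weaker than the theorem. With $w_0=2mt/M$ (and the hypothesis $m<M/2$ is used only to guarantee $w_0\le t$) you obtain $t>\frac{M}{2m}\cdot\frac{\log(M/2)}{\log(eM/(2m))}$, off by essentially a factor of $2$. Moreover, the hoped-for rescue by ``optimizing $w_0$ more carefully'' does not close the gap uniformly. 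Writing $w_0=\alpha mt/M$, the combined inequality gives $t>\frac{M}{\alpha m}\cdot\frac{\log M+\log(1-1/\alpha)}{\log(eM/m)-\log\alpha}$; as $M/m\to\infty$ with $\alpha\to 1$ slowly this recovers the constant asymptotically, but the theorem is a clean inequality valid up to $m<M/2$, and near that boundary the framework genuinely fails: for $m\approx M/2$ the constraint $w_0\le t$ forces $\alpha\le 2$, and the best you can extract is roughly $t\gtrsim\ln(M/2)\approx 0.69\log M$, while the stated bound demands about $\frac{2\log M}{\log(2e)}\approx 0.94\log M$. So the loss from Markov's inequality plus the estimate $\sum_{i\le w_0}\binom{t}{i}\le(et/w_0)^{w_0}$ is not a removable constant within your scheme; a different key step is needed.

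The missing idea is to avoid the threshold/Markov step entirely and bound the number of distinct signatures through their \emph{average} weight, most cleanly via entropy subadditivity (this is Katona's method). Let $X^*$ be a uniformly random element of $X$; since $\sigma$ is injective,
\[
\log M \;=\; H\bigl(\sigma(X^*)\bigr)\;\le\;\sum_{j=1}^{t}H\bigl(\mathbf{1}[X^*\in A_j]\bigr)\;=\;\sum_{j=1}^{t}h\!\left(\tfrac{|A_j|}{M}\right)\;\le\; t\, h\!\left(\tfrac{m}{M}\right),
\]
where $h$ is the binary entropy function and the last inequality is exactly where $m<M/2$ does its real work: $h$ is increasing on $[0,1/2]$ and each $|A_j|/M\le m/M<1/2$. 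Combining with the elementary estimate $h(p)\le p\log\frac{e}{p}$ (which follows from $-(1-p)\ln(1-p)\le p$) at $p=m/M$ gives $\log M\le t\,\frac{m}{M}\log\frac{eM}{m}$, i.e. $t\ge\frac{M}{m}\cdot\frac{\log M}{\log(eM/m)}$ with the exact stated constant. Equivalently, one can argue without entropy that among $M$ distinct binary vectors the total weight is minimized by taking the lightest vectors first, and compare that minimum with $mt$; either route replaces your lossy ``half the elements have weight $\le 2\bar w$'' step by an exact count and yields the theorem as stated.
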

Secondly, we will apply the Chernoff inequalities (see \cite{AS}).

\noindent \textbf{Chernoff bound:} \textit{If $X$ is a binomially
distributed random variable with parameters $n$ and $p$ and
$\lambda=np$, then for any $t\geq 0$ we have
$$\mathbb{P}(X \geq \mathbb{E}X+t) \leq \exp\left(-\frac{t^2}{2(\lambda+t/3)}\right)$$
and $$\mathbb{P}(X \leq \mathbb{E}X-t) \leq
\exp\left(-\frac{t^2}{2\lambda}\right)\ .$$}

Our strategy will be quite simple. Whenever we are allowed to use balls of radius at most $r$, we will indeed use balls of radius $r$ and place the centers of the balls randomly. Then we have to analyze how many randomly placed balls will guarantee the separation of all pairs of vertices. Note that in any graph $G$ two vertices $x,y \in V(G)$ are separated by a ball of radius $r$ with center $z \in V(G)$ if and only if $z \in B_G(x,r) \triangle B_G(y,r)$, where $X\triangle Y=X\setminus Y\cup Y\setminus X$ for any two sets $X$ and $Y$. Therefore to obtain our results we will need bounds on $|B(x,r)|$, the sizes of all balls of radius $r$ in $G(n,p)$ and also on the sizes of pairwise intersections of balls of radius $r$ in $G(n,p)$.

These required bounds are strongly connected to the results of Bollob\'as. In \cite{B}, he determined the diameter $d=d(n)$ of $G(n,p)$ (it was also discovered by Klee and Larmann \cite{KL} when $d(n)$ is constant). Clearly, in any graph, it is only worth trying to search with balls of radius at most $d-1$. First we state his results on the diameter of $G(n,p)$, then we summarize his proof approach and also cite Lemma 5 of his paper as we will need to use that as well.

\begin{theorem}[Bollob\'as, \cite{B}]\label{thm:boll}

\textbf{(i)} Suppose $p^2n - 2 \log n \rightarrow \infty$ and $n^2(1-p)\rightarrow \infty$. Then $G(n,p)$ has diameter 2 asymptotically almost surely.

\textbf{(ii)} Suppose the functions $d = d(n) \ge 3$ and $0 <p = p(n) < 1$
satisfy
$(\log n)/d - 3 \log \log n \rightarrow \infty$,
$p^dn^{d-1} - 2 \log n \rightarrow \infty$ and $p^{d-1}n^{d-2} - 2 \log n\rightarrow -\infty$.
Then $G(n,p)$ has diameter $d$ asymptotically almost surely.
\end{theorem}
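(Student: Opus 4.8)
The plan is to prove both parts with a single first-moment/second-moment scheme applied to the random variable that counts pairs of vertices at a prescribed distance. For an integer $k$ write $N_{>k}$ for the number of unordered pairs $\{u,v\}$ with $d(u,v)>k$. The upper bound $\mathrm{diam}(G(n,p))\le d$ will follow from $\mathbb E[N_{>d}]\to 0$, and the matching lower bound $\mathrm{diam}(G(n,p))\ge d$ from $N_{>d-1}>0$ a.a.s.; for part (i) the lower bound is just the (easy) statement that $G(n,p)$ is a.a.s.\ not complete, which is where the hypothesis $n^2(1-p)\to\infty$ enters.

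First I would estimate, for a fixed pair $u,v$, the probability $\mathbb P[d(u,v)>k]$ via path counting. A $u$–$v$ path of length $\ell$ uses $\ell-1$ internal vertices, so the expected number of such paths is $(1+o(1))n^{\ell-1}p^{\ell}$, and when $np\to\infty$ the sum over $\ell\le k$ is dominated by $\ell=k$, giving $\approx n^{k-1}p^{k}$. If the relevant paths were independent one would get $\mathbb P[d(u,v)>k]\approx\exp(-n^{k-1}p^{k})$, whence $\mathbb E[N_{>k}]\approx \exp(2\log n-n^{k-1}p^{k})$. Taking $k=d$ and using $p^{d}n^{d-1}-2\log n\to\infty$ makes this tend to $0$ (upper bound), while taking $k=d-1$ and using $p^{d-1}n^{d-2}-2\log n\to-\infty$ makes $\mathbb E[N_{>d-1}]\to\infty$ (first step of the lower bound). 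This already explains the shape of all three hypotheses of part (ii).

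To make the upper bound rigorous I would avoid the fictitious independence by exposing neighborhoods breadth-first: writing $B_j(u)$ for the $j$-ball, I would use the Chernoff bounds stated above to show that, as long as $|B_j(u)|=o(n)$, the next layer satisfies $|B_{j+1}(u)|=(1+o(1))\,np\,|B_j(u)|$ with probability $1-o(n^{-2})$, so that for $a=\lceil(d-1)/2\rceil$ and $b=\lfloor(d-1)/2\rfloor$ the balls $B_a(u)$ and $B_b(v)$ have sizes about $(np)^{a}$ and $(np)^{b}$. Two disjoint vertex sets of these sizes are joined by no edge with probability about $(1-p)^{(np)^{a+b}}\approx\exp(-n^{d-1}p^{d})$, and any such edge yields $d(u,v)\le a+b+1=d$; after a union bound over all $\binom n2$ pairs this is still $o(1)$ precisely when $p^{d}n^{d-1}-2\log n\to\infty$. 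The hypothesis $(\log n)/d-3\log\log n\to\infty$, i.e.\ $d=o(\log n/\log\log n)$, is exactly what keeps $(np)^{a}$ safely below $n$ so the layers never saturate, and keeps the accumulated Chernoff error $o(1)$ after multiplication by $n^2$.

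For the lower bound the real work is a second-moment estimate on $Z:=N_{>d-1}$. Having shown $\mathbb E[Z]\to\infty$, I would compute $\mathbb E[Z^2]=\sum_{\{u,v\},\{x,y\}}\mathbb P[d(u,v)>d-1,\ d(x,y)>d-1]$ and argue that the off-diagonal terms factor as $(1+o(1))\,\mathbb P[d(u,v)>d-1]\,\mathbb P[d(x,y)>d-1]$, so that $\mathbb E[Z^2]=(1+o(1))(\mathbb E Z)^2$ and Chebyshev gives $Z>0$ a.a.s. The crux, and the step I expect to be the main obstacle, is controlling these correlations: pairs whose short-path structures share vertices or edges are positively correlated, and bounding their total contribution again needs the sub-linear neighborhood control (hence once more $d=o(\log n/\log\log n)$) guaranteeing that overlapping path systems are negligibly rare. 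For part (i) this whole step collapses: $\mathrm{diam}\ge 2$ is immediate once $n^2(1-p)\to\infty$ forces a non-edge to exist a.a.s., while $\mathbb E[N_{>2}]\to 0$ under $p^2n-2\log n\to\infty$ gives $\mathrm{diam}\le 2$.
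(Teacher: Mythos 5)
This theorem is quoted from Bollob\'as \cite{B}: the paper contains no proof of its own, only a two-step summary of Bollob\'as's method (exposing the edges of $G(n,p)$ in rounds so that each new layer of a ball is binomially distributed given the previous ones, then Chernoff concentration and a union bound), and your proposal follows essentially that same route --- BFS layer growth with relative errors kept small by the hypothesis $(\log n)/d - 3\log\log n \to \infty$ (which forces $np \ge (\log n)^{3+o(1)}$, so the per-layer Chernoff error times $\log n$ is $o(1)$, exactly what lets the exponent survive when $p^d n^{d-1}$ exceeds $2\log n$ only barely), plus a first/second moment argument on the count of pairs at distance greater than $d-1$ for the lower bound. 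Your plan is sound and correctly identifies where each hypothesis enters; the one step you leave unexecuted, bounding the positive correlations in $\mathbb{E}[Z^2]$ (the events $\{d(u,v)>d-1\}$ are decreasing, hence FKG-positively correlated, so the factorization is genuinely something to prove), is precisely the technical heart of Bollob\'as's argument, where he in fact carries out a full moment computation proving Poisson convergence of the number of distant pairs --- a statement strictly stronger than the a.a.s.\ claim you need.
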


Actually, Bollob\'as proved more: he showed that if $p^dn^{d-1}=\log (n^2/c)$ for some positive constant $c$, then the distribution of the number of pairs of vertices in $G(n,p)$ that are of distance $d+1$ from each other tends to a Poisson distribution and therefore the probability that $G(n,p)$ has diameter $d$ tends to $e^{-c/2}$ while the probability that the diameter is $d+1$ tends to $1-e^{-c/2}$.

The main lemma which essentially deals with the required bounds on $|B_{G(n,p)}(x,r)|$ and $|B_{G(n,p)}(x,r) \triangle B_{G(n,p)}(y,r)|$ is Lemma 5 in the paper of Bollob\'as. He carries out all calculations only for $p^dn^{d-1}=\log (n^2/c)$, but when $p$ is bounded away from this threshold value, all calculations are even easier. Let us summarize how Bollob\'as obtains his bounds. For a fixed vertex $x$, we write $B(x,j)=B_{G(n,p)}(x,j)$ and $S(x,j)=B(x,j) \setminus B(x,j-1)$.
\begin{itemize}
\item
To obtain bounds on the size of $B(x,r)$ for a fixed vertex $x$ of $G(n,p)$ Bollob\'as exposes the edges of $G(n,p)$ in rounds. In the $j$th round he exposes edges between $S(x,j-1)$ and $V(G(n,p)) \setminus B(x,j-1)$ to determine $S(x,j)$ and thus $B(x,j)$. Note that if we 
condition on $B(x,j-1)$, then the size of $S(x,j)$ is a binomial random variable as for each vertex $y$ of $V(G(n,p)) \setminus B(x,j-1)$ the probability of $y \in S(x,j)$ is $1- (1-p)^{|S(x,j)|}$ and these events are independent. Careful analysis of concentration inequalities for binomial random variables gives a very small error probability of $|B(x,r)|$ not behaving as expected. Then the union bound is used to get the result below for all vertices.
\item
Similarly to the steps above, fixing two vertices $x,y$ of $G(n,p)$ one can expose the edges in the following rounds. In round $j$ one exposes edges between $S(x,j-1)$ and $V(G(n,p)) \setminus B(x,j-1)$ and also between $S(y,j-1)$ and $V(G(n,p)) \setminus B(y,j-1)$. Vertices in $B(x,j)\cap B(y,j) \setminus (B(x,j-1)\cap B(y,j-1))$ may come either from $B(x,j-1)\setminus B(y,j-1)$ or $B(y,j-1)\setminus B(x,j-1)$ or from $V(G(n,p)) \setminus (B(x,j-1)\cup B(y,j-1))$. Thus the number of new vertices in the intersection will be the sum of 3 binomially distributed random variable. Concentration inequalities and the union bound is used again to obtain bounds for all pairs $x,y$.
\end{itemize}

\begin{lemma}[Bollob\'as, Lemma 5 in \cite{B}]\label{lem:bollem}
Let $d$ denote the diameter of $G(n,p)$ with $2 \le d$ and $(\log n)/d - 3 \log \log n \rightarrow \infty$. Then the following holds for $G(n,p)$ asymptotically almost surely: for any $1 \le j \le d-2$ and any pair $x,y$ of vertices we have $$|B_{G(n,p)}(x,j)|=(1+o(1))(np)^j \hskip 0.4truecm\textnormal{and}\ \hskip 0.3truecm
|B_{G(n,p)}(x,j) \cap B_{G(n,p)}(y,j)|=O(n^{2j-1}p^{2j}).$$
\end{lemma}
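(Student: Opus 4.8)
The plan is to establish the two estimates separately, in both cases exposing the edges of $G(n,p)$ by breadth-first rounds, as sketched before the statement, and controlling the resulting binomial random variables with the Chernoff inequalities together with a union bound. For $|B(x,j)|$ I would induct on $j$. Writing $S(x,j)=B(x,j)\setminus B(x,j-1)$, once the graph has been revealed up to $B(x,j-1)$ each of the $n-|B(x,j-1)|$ unreached vertices lies in $S(x,j)$ independently with probability $1-(1-p)^{|S(x,j-1)|}$, so conditionally on $B(x,j-1)$ the size $|S(x,j)|$ is binomial. Under the inductive hypothesis $|S(x,j-1)|=(1+o(1))(np)^{j-1}$, and for the relevant range of $p$, this success probability is $(1+o(1))p(np)^{j-1}$, giving $\mathbb{E}[|S(x,j)|]=(1+o(1))(np)^j$. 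The Chernoff bounds then concentrate $|S(x,j)|$ about its mean, and since the sphere sizes grow geometrically the dominant term yields $|B(x,j)|=(1+o(1))(np)^j$.

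The delicate point is quantitative rather than structural: every inductive step contributes a failure probability, and these must still sum to $o(1)$ after a union bound over all $n$ centres $x$ and all levels $1\le j\le d-2$. This is exactly where the hypothesis $(\log n)/d-3\log\log n\to\infty$ is used — it keeps $(np)^j$ large enough that the Chernoff tail, of order $\exp(-\Theta((np)^j))$, beats the factor $n$ coming from the union bound at every level, while the ceiling $j\le d-2$ prevents the balls from saturating a constant fraction of $V(G(n,p))$, where the clean multiplicative growth would fail. I expect this propagation of error estimates through the induction, and in particular choosing the Chernoff deviation parameter uniformly in $j$ so that the union bound survives up to level $d-2$, to be the main obstacle. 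We may assume the threshold scaling $p^dn^{d-1}=\log(n^2/c)$; away from it the balls grow either faster or slower in a way that only makes the tail estimates easier.

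For the intersection bound I would expose the edges around $x$ and around $y$ simultaneously, round by round, and follow $B(x,j)\cap B(y,j)$. A vertex entering the intersection at round $j$ falls into exactly one of three classes, according to whether at round $j-1$ it lay in $B(x,j-1)\setminus B(y,j-1)$, in $B(y,j-1)\setminus B(x,j-1)$, or outside $B(x,j-1)\cup B(y,j-1)$; these partition the new vertices, since a vertex already in both balls cannot be new. Conditioning on the spheres, whose sizes are $(1+o(1))(np)^{j-1}$ by the first part, each class contributes a binomial count. The first two classes each have expectation $O(n^{2j-2}p^{2j-1})$, while the third has expectation
\[
O\!\left(n\,\bigl(p(np)^{j-1}\bigr)^2\right)=O\!\left(n^{2j-1}p^{2j}\right).
\]
Since $np\to\infty$ the third class dominates the other two by a factor of order $np$, so after one more application of the Chernoff bounds and a union bound over all pairs $x,y$ we obtain $|B(x,j)\cap B(y,j)|=O(n^{2j-1}p^{2j})$ asymptotically almost surely, as claimed.
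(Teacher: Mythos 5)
Your proposal is correct and takes essentially the same route as the paper, which does not reprove this statement but quotes it as Lemma 5 of Bollob\'as \cite{B} and summarizes precisely the argument you give: round-by-round exposure of edges so that each sphere $S(x,j)$ is conditionally binomial given $B(x,j-1)$, Chernoff concentration with a union bound over all centres (resp.\ pairs) using the hypothesis $(\log n)/d - 3\log\log n \to \infty$ to keep $np$ polylogarithmically large, the reduction to the threshold scaling $p^dn^{d-1}=\log(n^2/c)$, and the three-class decomposition of the vertices newly entering $B(x,j)\cap B(y,j)$. The one point you gloss over --- that for a vertex outside both balls the events of becoming adjacent to $S(x,j-1)$ and to $S(y,j-1)$ are correlated through $S(x,j-1)\cap S(y,j-1)$, so the entry probability is not exactly the product --- is harmless, since by the inductive intersection bound this contribution is $O(n^{2j-2}p^{2j-1})=o(n^{2j-1}p^{2j})$, exactly the refinement the paper carries out (splitting case (a) into (a1) and (a2)) in the proof of its Lemma~\ref{lem:randomball}.
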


\lref{bollem} enables us to obtain bounds on $M(G(n,p),r)$ when $r\le d-2$.

\begin{theorem}\label{thm:random}
 Suppose the functions $d = d(n) \ge 3$ and $0 <p = p(n) < 1$
satisfy
$$(\log n)/d - 3 \log \log n \rightarrow \infty,\hskip 0.1truecm p^dn^{d-1} - 2 \log n \rightarrow \infty, \hskip 0.1truecm p^{d-1}n^{d-2} - 2 \log n\rightarrow -\infty.$$
Then for any $1 \le r\le d-2$ we have that
$$(1-o(1))n^{1-r}p^{-r}\frac{\log n}{\log (n^{1-r}p^{-r})}\leq M(G(n,p),\leq r)\le (1+o(1)) n^{1-r}p^{-r}\ln n$$
holds asymptotically almost surely.
\end{theorem}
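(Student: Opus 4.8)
The plan is to prove the two bounds by separate arguments: the lower bound from \tref{katona}, and the upper bound by placing the centres of radius-$r$ balls uniformly at random and applying a union bound. Throughout I condition on the a.a.s.\ event $\cE$ that $G(n,p)$ has diameter $d$ and satisfies the conclusions of \lref{bollem}, so that for every $1\le j\le d-2$ and every pair $x,y$ we have $|B(x,j)|=(1+o(1))(np)^j$ and $|B(x,j)\cap B(y,j)|=O(n^{2j-1}p^{2j})$. Write $P:=n^{1-r}p^{-r}=n/(np)^r$. The diameter hypotheses force $P\to\infty$ for every $r\le d-2$, and it is this fact that makes both halves of the argument work.

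For the lower bound, recall that a set of balls finds the unknown vertex if and only if it is a separating system. On $\cE$ every ball of radius at most $r$ is contained in some $B(x,r)$ and hence has size at most $m:=(1+o(1))(np)^r$; since $P\to\infty$ we have $m<n/2$, so \tref{katona} applies with the ground set of size $M=n$ and yields
\[
M(G(n,p),\le r)\ \ge\ \frac{n}{m}\cdot\frac{\log n}{\log(en/m)}\ =\ (1-o(1))\,P\,\frac{\log n}{\log P},
\]
using $n/m=(1+o(1))P$ and $\log(en/m)=(1+o(1))\log P$, the latter because $P\to\infty$. As $\log P=\log(n^{1-r}p^{-r})$, this is exactly the claimed lower bound.

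For the upper bound the decisive quantity is the symmetric difference, since a radius-$r$ ball centred at $z$ separates $x$ and $y$ precisely when $z\in B(x,r)\triangle B(y,r)$. On $\cE$ the intersection term is negligible, because $n^{2r-1}p^{2r}/(np)^r=(np)^r/n=1/P=o(1)$, so for \emph{every} pair $x\ne y$ we get
\[
|B(x,r)\triangle B(y,r)|=2(1+o(1))(np)^r-2\,O(n^{2r-1}p^{2r})=(2+o(1))(np)^r.
\]
Thus a uniformly random centre separates any fixed pair with probability $(2+o(1))/P$, uniformly over pairs. Fixing $G\in\cE$ and placing $N:=(1+\varepsilon)P\ln n$ centres independently and uniformly at random, the probability that a given pair stays unseparated is at most $\exp(-(2+o(1))N/P)$, so a union bound over the fewer than $n^2$ pairs gives failure probability at most $n^2\exp(-(2+o(1))(1+\varepsilon)\ln n)=n^{-2\varepsilon+o(1)}\to0$. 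Hence for every $G\in\cE$ some choice of $N$ centres separates all pairs, so $M(G(n,p),\le r)\le(1+\varepsilon)P\ln n$ a.a.s.; letting $\varepsilon\to0$ gives $M(G(n,p),\le r)\le(1+o(1))\,n^{1-r}p^{-r}\ln n$.

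The main obstacle is to make the constants match so that the upper bound carries $\ln n$ rather than $2\ln n$: this requires the symmetric difference to equal $(2+o(1))(np)^r$ --- not merely to be $\Omega((np)^r)$ --- for \emph{all} pairs, close pairs included, which is precisely where the uniform intersection estimate of \lref{bollem} is indispensable. A secondary technical point is to confirm that the diameter hypotheses really do give $P\to\infty$ for the whole range $r\le d-2$; this single fact is what simultaneously guarantees $m<n/2$ (so that \tref{katona} is applicable) and collapses $\log(en/m)$ to $(1+o(1))\log P$.
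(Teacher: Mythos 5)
Your proposal is correct and follows essentially the same route as the paper: the lower bound via Katona's theorem with $M=n$ and $m=(1+o(1))(np)^r$ supplied by Bollob\'as's lemma, and the upper bound by uniformly random centres using the symmetric-difference estimate $(2+o(1))(np)^r$. The only differences are cosmetic --- the paper bounds the expected number of unseparated pairs by $1/2$ with $t\ge n^{1-r}p^{-r}\ln n$ rather than running your $(1+\varepsilon)$-slack union bound, and you are in fact slightly more careful than the paper in checking Katona's hypothesis $m<M/2$ and in noting explicitly that $n^{1-r}p^{-r}\to\infty$ makes the intersection term negligible.
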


\begin{proof}
To obtain the lower bound we have to plug the values $M=n$ and $m=(1+o(1))(np)^r$ into \tref{katona}. Note that the value of $m$ follows from \lref{bollem}. Then \tref{katona} yields
\[
M(G(n,p),r) \ge \frac{\log n}{\log e\frac{n}{(1+o(1))(np)^r}}\frac{n}{(1+o(1))(np)^r}=(1-o(1))n^{1-r}p^{-r}\frac{\log n}{\log (n^{1-r}p^{-r})}.
\]

To obtain the upper bound we will place balls of radius $r$ at centers $u_1,u_2,...,u_t \in V(G(n,p))$ chosen uniformly at random and independently of each other. We introduce the random indicator variables $Y_{v_i,v_j}$ of the event that the vertices $v_i,v_j$ are not separated by the balls $B_{G(n,p)}(u_1,r),\dots,B_{G(n,p)}(u_t,r)$. We write $Y=\sum_{1\le i<j\le n}Y_{v_i,v_j}$ to denote the random variable of the total number of unseparated vertices. Observe that by \lref{bollem} we have
$|B_{G(n,p)}(v_i,r)\setminus B_{G(n,p)}(v_j,r)|=(1+o(1))(np)^r$ and thus
$$\mathbb{E}(Y_{v_i,v_j})=\left(1-\frac{|B_{G(n,p)}(v_i,r)\mathbin{\triangle} B_{G(n,p)}(v_j,r)|}{n}\right)^t=\left(1-\frac{(2+o(1))(np)^r}{n}\right)^t.$$
Therefore if $t\ge n^{1-r}p^{-r}\ln n$ holds, then we have
\[
\mathbb{E}(Y)\le \binom{n}{2}\left(1-\frac{(2+o(1))(np)^r}{n}\right)^t\le \frac{1}{2}\exp(2\ln n-2tn^{r-1}p^r) \le \frac{1}{2}.
\]
Therefore there exists a particular choice of $u_1,u_2,...,u_t$ such that the number of non-separated pairs of vertices is 0.
\end{proof}

One would expect that $|B_{G(n,p)}(x,d-1)|=(1+o(1))(pn)^{d-1}$ should hold. It does for most values of $p$, but for a tiny range it fails since if $n^{-(d-2)/(d-1)} \le p\le n^{-(d-2)/(d-1)}(2\log n)^{1/(d-1)}$, then $(pn)^{d-1}$ is larger than $n$ and still the diameter of $G(n,p)$ is~$d$. For this range of $p$  we introduce the function $f(n)=f_d(n)$ defined by the equation $(np)^{d-1}=n\cdot f(n)$. We will subdivide the interval of $p$ for which the diameter of $G(n,p)$ is $d$ into four smaller intervals.  When $f(n) \rightarrow \infty$, then balls of radius $d-1$ consist of almost all vertices, and therefore we will be interested in their complement and in the fact that whether these complements are still bigger than the balls of radius $d-2$. If this is so, then it is still worth picking randomly centered balls of radius $d-1$, than those of radius $d-2$.

\begin{lemma}
\label{lem:randomball} (i) If $(np)^{d-1}=o(n)$, then a.a.s $$|B(x,d-1)|=(1+o(1))(np)^{d-1}, |B(x_1,d-1) \triangle B(x_2,d-1)|=(2+o(1))(np)^{d-1}$$ hold for all vertices $x,x_1,x_2 \in V(G(n,p))$.

(ii) If $(np)^{d-1}=(c+o(1))n$ holds for some positive constant $c$, then a.a.s. $$|\overline{B(x,d-1)}|=(1-e^{-c}+o(1))n, |B(x_1,d-1) \triangle B(x_2,d-1)|=(1-e^{-2c}+o(1))n/e^{f(n)}$$ hold for all vertices $x,x_1,x_2 \in V(G(n,p))$.

(iii) If $f(n)\le (1-\varepsilon)\frac{1}{d-1}\log n$ holds, then a.a.s.$$|\overline{B(x,d-1)}|=(1+o(1))n/e^{f(n)}, |B(x_1,d-1) \triangle B(x_2,d-1)|=(2+o(1))n/e^{f(n)}$$ hold for all vertices $x,x_1,x_2 \in V(G(n,p))$.

(iv) If $f(n)\ge (1+\varepsilon)\frac{1}{d-1}\log n$ holds, then a.a.s. we have $$|\overline{B(x,d-1)}|<|B(x,d-2)|$$ for all $x \in V(G(n,p)$.
\end{lemma}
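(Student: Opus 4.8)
The plan is to run Bollob\'as's round-by-round edge-exposure argument (the one behind \lref{bollem}) for one more round, from radius $d-2$ to radius $d-1$. By \lref{bollem} we may condition on the a.a.s.\ event that $|B(x,d-2)|=(1+o(1))(np)^{d-2}=o(n)$ and $|S(x,d-2)|=(1+o(1))(np)^{d-2}$ for every vertex $x$. The crucial observation is that a vertex $z\notin B(x,d-2)$ fails to lie in $B(x,d-1)$ if and only if it sends no edge to $S(x,d-2)$, and these edges have not yet been exposed; hence, conditionally on $S(x,d-2)$,
\[
\mathbb{P}\big(z\notin B(x,d-1)\big)=(1-p)^{|S(x,d-2)|}=(1+o(1))\exp\!\big(-p(np)^{d-2}\big)=(1+o(1))e^{-f(n)},
\]
where I used $\ln(1-p)=-(1+o(1))p$ and $p(np)^{d-2}=(np)^{d-1}/n=f(n)$. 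Since distinct $z$ use disjoint, fresh edge-sets, these events are independent given $S(x,d-2)$, so $|\overline{B(x,d-1)}|$ is, to leading order, a binomial variable with mean $(1+o(1))ne^{-f(n)}$. Applying the Chernoff bound and then a union bound over the $n$ choices of $x$ yields the stated concentration, provided the mean is large enough for the tail estimate to beat the factor $n$.

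I would then specialize. For \textbf{(i)} $f(n)\to 0$ the ball is still of size $o(n)$, so I track the ball directly: $\mathbb{E}|S(x,d-1)|=n(1-e^{-f(n)})=(1+o(1))nf(n)=(1+o(1))(np)^{d-1}$, giving $|B(x,d-1)|=(1+o(1))(np)^{d-1}$, and the symmetric difference is $(2+o(1))(np)^{d-1}$ because extending the intersection estimate of \lref{bollem} to radius $d-1$ gives $|B(x_1,d-1)\cap B(x_2,d-1)|=O((np)^{2(d-1)}/n)=O((np)^{d-1}f(n))=o((np)^{d-1})$, so the two balls are essentially disjoint. For \textbf{(ii)} $f(n)\to c$ and for \textbf{(iii)} $f(n)\to\infty$ with $f(n)\le (1-\varepsilon)\frac{\log n}{d-1}$ the complement has mean $(1+o(1))ne^{-f(n)}$; in case (iii) this mean is at least $n^{\varepsilon/(d-1)}$, comfortably polynomial, so the Chernoff-plus-union-bound argument delivers $|\overline{B(x,d-1)}|=(1+o(1))n/e^{f(n)}$ uniformly. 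For \textbf{(iv)} I only need an upper bound on the complement and a lower bound on $|B(x,d-2)|$: the former is $(1+o(1))ne^{-f(n)}\le n^{1-(1+\varepsilon)/(d-1)}$ and the latter is $(1-o(1))(np)^{d-2}=(1-o(1))nf(n)/(np)$ by \lref{bollem}; the desired inequality $ne^{-f(n)}<(np)^{d-2}$ reduces to $np<f(n)e^{f(n)}$, which holds because $e^{f(n)}\ge n^{(1+\varepsilon)/(d-1)}$ dominates $np=(nf(n))^{1/(d-1)}\approx n^{1/(d-1)}$.

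For the symmetric difference in (ii) and (iii) the same exposure shows, for $z$ outside both $(d-2)$-balls, that $z\in B(x_1,d-1)\triangle B(x_2,d-1)$ exactly when $z$ sends an edge to precisely one of $S(x_1,d-2),S(x_2,d-2)$; when these two spheres are essentially disjoint the events are independent with respective probabilities $1-e^{-f(n)}$, so the symmetric difference has mean $(2+o(1))e^{-f(n)}(1-e^{-f(n)})n$, whose leading constant specializes as $f(n)\to c$ or $f(n)\to\infty$. The main obstacle is uniformity over all vertices and pairs: the Chernoff tail for $|\overline{B(x,d-1)}|$ must be smaller than $1/n$ (respectively $1/n^2$ for pairs), which forces the mean $ne^{-f(n)}$ to stay polynomially large — this is precisely why (iii) caps $f(n)$ at $(1-\varepsilon)\frac{\log n}{d-1}$ and why nothing is claimed in the excluded band $f(n)\approx\frac{\log n}{d-1}$, where the complement shrinks to subpolynomial size. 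A secondary delicate point is pairs $x_1,x_2$ whose $(d-2)$-spheres overlap substantially (for instance adjacent vertices), where the independence used for the symmetric difference fails and one must control the overlapping contribution directly through the intersection estimate of \lref{bollem}.
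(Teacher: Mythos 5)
Your proposal follows essentially the same route as the paper's proof: you run Bollob\'as's round-by-round exposure one extra step, condition on the a.a.s.\ event of \lref{bollem} that $|S(x,d-2)|=(1+o(1))(np)^{d-2}$ for all $x$ (and the intersection bound for all pairs), observe that conditionally the relevant counts are binomial with success probability $(1-p)^{|S(x,d-2)|}=(1+o(1))e^{-f(n)}$, and finish with Chernoff plus a union bound; your treatment of the symmetric difference via near-independence for vertices outside both $(d-2)$-balls, with the overlapping contribution controlled through the radius-$(d-2)$ intersection estimate, is the paper's case analysis (a1)/(a2)/(b) in compressed form, and your reduction of (iv) to $np<f(n)e^{f(n)}$ is exactly the paper's computation.

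One substantive point of divergence: in case (ii) your constants disagree with the lemma as displayed, and your computation is the one that matches the exposure argument. Conditioning as above gives $|B(x,d-1)|=(1-e^{-c}+o(1))n$ and hence $|\overline{B(x,d-1)}|=(e^{-c}+o(1))n$, which is your $(1+o(1))ne^{-f(n)}$; the lemma's ``$|\overline{B(x,d-1)}|=(1-e^{-c}+o(1))n$'' evidently refers to $|B(x,d-1)|$, and the factor $n/e^{f(n)}$ in (ii) should simply be $n$. Moreover, your symmetric-difference constant $2e^{-c}(1-e^{-c})$ is the correct one: with $|B_i|=(a+o(1))n$ for $a=1-e^{-c}$ and $|B_1\cap B_2|=(a^2+o(1))n$ from the dominant case (a2), one gets $|B_1\triangle B_2|=(2a-2a^2+o(1))n$, whereas the paper's proof computes $2a-a^2=1-e^{-2c}$, i.e.\ the size of the \emph{union} $|B_1|+|B_2|-|B_1\cap B_2|$ rather than the symmetric difference $|B_1|+|B_2|-2|B_1\cap B_2|$. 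Two harmless glosses on your side: the ``extension of \lref{bollem} to radius $d-1$'' you invoke in case (i) is not available off the shelf (\lref{bollem} stops at $j\le d-2$) and must be proved exactly by the sketch in your closing paragraph, which is the paper's three-case argument; and vertices lying inside one of the two $(d-2)$-balls (the paper's case (b)) are not covered by your ``outside both balls'' computation, but they number only $O((np)^{d-2})$, which is $o$ of every main term in cases (i)--(iii) (for (iii) one should note, as the paper does, that $\log$ must be read as natural logarithm there, so that $ne^{-f(n)}\ge n^{(d-2+\varepsilon)/(d-1)}\gg (np)^{d-2}$), so absorbing them into the error term is legitimate.
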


\begin{proof} Let us condition on the event that is stated in \lref{bollem}. More precisely, when proving statements about $B(x,d-1)$, we pick a vertex $x$, expose edges in rounds among vertices $S(x,j-1)$ and $V(G(n,p))\setminus B(x,j-1)$. We condition on the event $E_x$ that $|S(x,d-2)|=(1+o(1))(np)^{d-2}$. Bollob\'as proved that $E_x$ holds with probability $1-n^{-K}$ for some large constant $K$ and thus $E_x$ holds with high probability for all vertices simultanously. Similarly, when proving statements on $|B(x_1,d-1)\cap B(x_2,d-1)|$ we pick two vertices $x_1$ and $x_2$, 
expose edges in rounds among vertices in $S(x_1,j-1)$ and $V(G(n,p))\setminus B(x_1,j-1)$ and also among vertices in $S(x_2,j-1)$ and $V(G(n,p))\setminus B(x_2,j-1)$. We condition on the event $E_{x_1,x_2}$ that $|S(x_1,d-2)|=(1+o(1))(np)^{d-2}=|S(x_2,d-2)|$ and $|B(x_1,d-2)\cap B(x_2,d-2)|=O((np)^{d-3})$. Again, these events hold with probability $1-n^{-K}$ for some large constant $K$.

Before considering the four cases let us observe that $\Omega(n^{1/2})=(np)^{d-2}=o(n)$. We will also use the inequalities $pa-p^2\binom{a}{2} \le 1-(1-p)^{a}\le pa$.

Consider first the cases (i), (ii) when $(np)^{d-1}=o(n)$ or $(np)^{d-1}=(c+o(1))n$ holds for some positive constant $c$. In this case for a vertex $y \notin B(x,d-2)$ we have $\mathbb{P}(y \in S(x,d-1))=1-(1-p)^{|S(x,d-2)|}$. This is
$(1+o(1))p^{d-1}n^{d-2}$ and thus $\mathbb{E}(|S(x,d-1)|)=(1+o(1))(np)^{d-1}$ if $(np)^{d-1}=o(n)$, while $(n-o(n))(1-(1-p)^{|S(x,d-2)|})=(1-e^{-c}+o(1))n$ if $(np)^{d-1}=(c+o(1))n$. As $(np)^{d-1}\ge (np)^{d-2}=\Omega(n^{1/2})$ holds, by Chernoff's inequality we obtain that $\mathbb{P}(|S(x,d-1)-\mathbb{E}(|S(x,d-1)|)|\ge n^{1/3})$ is exponentially small in a positive power of $n$. The union bound gives that asymptotically almost surely $|B(x,d-1)|=(1+o(1))|S(x,d-1)|=\mathbb{E}(|S(x,d-1)|)$ holds for all vertices.  

Let us condition on the sets $B(x_1,d-2),S(x_1,d-2),B(x_2,d-2)$ and $S(x_2,d-2)$. Then a vertex $y$ can become an element of $B(x_1,d-1)\cap B(x_2,d-1)$ in the following ways: 

(a) either $y \notin B(x_1,d-2) \cup B(x_2,d-2)$  and (a1) it is connected to a vertex in $B(x_1,d-2) \cap B(x_2,d-2)$ or (a2) is connected to at least one vertex both from $S(x_1,d-2)$ and $S(x_2,d-2)$ or 

(b) $y \in S(x_1,d-2) \setminus B(x_2,d-2)$ and $y$ is connected to a vertex in $S(x_2,d-2)$ or vice versa. The number of such vertices is binomially distributed in all three cases. 

In case (a1), using the result of \lref{bollem} on $|B(x_1,d-2)\cap B(x_2,d-2)|$ the expected value of such vertices is $O(p^{2(d-2)+1}n^{2(d-2)})=O(\frac{p}{(pn)^2}((pn)^{d-1})^2)=O(\frac{pn}{(pn)^2}(pn)^{d-1})=o((pn)^{d-1})$. 

In case (a2), the expected value of such vertices is $(1-(1-p)^{|S(x_1,d-2)|})(1-(1-p)^{|S(x_2,d-2)|})$ which is $O(\frac{1}{n}((pn)^{d-1})^2)=o((pn)^{d-1})$ provided $(np)^{d-1}=o(n)$. If $(np)^{d-1}=(c+o(1))n$, then this value is $\big{(}(1-e^{-c})^2+o(1)\big{)}n$. 

Finally, in case (b), the expected value of such vertices is $O((pn)^{d-2}p^{d-1}n^{d-2})=o((pn)^{d-1})$. In all three cases Chernoff's inequality and the union bound can be applied to obtain the same bounds not only for one, but all pairs of vertices.

Altogether, we obtain that $|B(x_1,d-1) \triangle B(x_2,d-1)|=(2+o(1))(np)^{d-1}$ holds almost surely for all $x_1,x_2$ if $(np)^{d-1}=o(n)$ and that
$|B(x_1,d-1) \triangle B(x_2,d-1)|=[2(1-e^{-c})-(1-e^{-c})^2+o(1)]n=(1-e^{-2c}+o(1))n$ if $(np)^{d-1}=(c+o(1))n$ holds.

Let us now turn our attention to (iii) and (iv). In these cases we are interested in $|\overline{B(x,d-1)}|$. Let us repeat that $f(n)$ is defined by
$f(n)=f_d(n)=(np)^{d-1}=n\cdot f(n)$ and that $f(n)$ tends to infinity as otherwise we were in case (i) or (ii).
If we condition on $B(x,d-2)$ and $S(x,d-2)$, then the size of $\overline{B(x,d-1)}$ is binomially distributed with $\mathbb{E}(|\overline{B(x,d-1)}|)=(n-|B(x,d-2)|)(1-p)^{|S(x,d-2)|}=(1+o(1))e^{\log n -(1+o(1))f(n)}$. If $f(n)\ge (1+\varepsilon)\frac{1}{d-1}\log n$ holds for some positive $\varepsilon$, then this expression is $o((np)^{d-2})$. Chernoff's inequality and the union bound ensures that this holds for all $x$ and we are done by (iv).

Using Chernoff's inequality we obtain that as long as $f(n)\le (1-\varepsilon)\frac{1}{d-1}\log n$ holds, we have a.a.s $|\overline{B(x,d-1)}|=(1+o(1))n/e^{f(n)}>|B(x,d-2)|$ for any vertex $x$. As $I:=\overline{B(x_1,d-1)}\cap \overline{B(x_2,d-1)}=\overline{B(x_1,d-1)\cup B_(x_2,d-1)}$ we have that for any $y\in V(G(n,p))\setminus (B(x_1,d-2)\cup B(x_2,d-2))$ the probability that $y$ is in $I$ is $(1-p)^{|S(x_1,d-2)\cup S(x_2,d-2)|}$ and thus $\mathbb{E}(|I|)=(1+o(1))n/e^{2f(n)}$. Note that as $f(n)$ tends to infinity, we have $n/e^{2f(n)}=o(n/e^{f(n)})$. The usual Chernoff's inequality argument shows that $|\overline{B(x_1,d-1)}\cap \overline{B(x_2,d-1)}|=o(n/e^{f(n)})$ and thus $|\overline{B(x_1,d-1)}\triangle \overline{B(x_2,d-1)}|=(2+o(1))n/e^{f(n)}$ holds a.a.s for any $x_1,x_2 \in V(G(n,p))$.
\end{proof}

\medskip

The results of \lref{randomball} enable us to imitate the proof of \tref{random} to obtain bounds on $M(G(n,p))$. Part (iv) of \lref{randomball} shows that in that range of $p$, one cannot improve the bounds of \tref{random} by using random balls of radius $d-1$. In all other cases, replacing the values of $|B(x_1,d-1) \triangle B(x_2,d-1)|=|\overline{B(x_1,d-1)}\triangle \overline{B(x_2,d-1)}|$ does indeed improve on bounds obtained by placing balls of radius $d-2$. However, their formulation would be a little lengthy due to the fact that sometimes one has to calculate with the size of balls of radius $d-1$ and sometimes with the size of their complements. Therefore here we state only the simplest case (corresponding to case (i) in Lemma 1.7.) and just mention that in all cases the lower and upper bounds that can be obtained differ only by a factor of a power of $\log n$.

\begin{theorem}\label{thm:randmore}
Suppose the functions $d = d(n) > 3$ and $0 <p = p(n) < 1$
satisfy
$(\log n)/d - 3 \log \log n \rightarrow \infty$,
$p^dn^{d-1} - 2 \log n \rightarrow \infty$. Furthermore,
 if $(np)^{d-1}=o(n)$ holds, then a.a.s we have
$$(1-o(1))\frac{n}{(np)^{d-1}}\frac{\log n}{\log n/(np)^{d-1}}\le M(G(n,p))\le (1+o(1)) \frac{n}{(np)^{d-1}}\ln n.$$
\end{theorem}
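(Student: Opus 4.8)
The plan is to repeat the argument of \tref{random} almost verbatim, with \lref{randomball}(i) playing the role that \lref{bollem} played there: under the hypothesis $(np)^{d-1}=o(n)$ it supplies exactly the two estimates $|B(x,d-1)|=(1+o(1))(np)^{d-1}$ and $|B(x_1,d-1)\triangle B(x_2,d-1)|=(2+o(1))(np)^{d-1}$ that drive both bounds. The only genuinely new point is that $M(G(n,p))$ permits balls of \emph{arbitrary} radius, so for the lower bound one must rule out the possibility that balls of some radius other than $d-1$ do better.

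For the upper bound I would place $t$ centers $u_1,\dots,u_t$ uniformly at random and independently and query the balls $B(u_i,d-1)$. By \lref{randomball}(i) a fixed pair $v_i,v_j$ fails to be separated by one random ball with probability $1-\frac{|B(v_i,d-1)\triangle B(v_j,d-1)|}{n}=1-\frac{(2+o(1))(np)^{d-1}}{n}$. Writing $Y$ for the number of unseparated pairs and taking $t=(1+o(1))\frac{n}{(np)^{d-1}}\ln n$, the same first-moment computation as in \tref{random} gives $\mathbb{E}(Y)\le \binom{n}{2}\exp\big(-(2+o(1))(np)^{d-1}t/n\big)<1$, so some placement separates every pair.

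For the lower bound I would first observe that $(np)^{d-1}=o(n)$ forces $p^{d-1}n^{d-2}\to 0$, so the third hypothesis of \tref{boll}(ii) is met and the diameter of $G(n,p)$ is a.a.s.\ exactly $d$. Consequently every ball of radius at least $d$ equals $V(G(n,p))$ and separates no pair, so in any separating family only balls of radius at most $d-1$ can occur; each such ball is contained in the radius-$(d-1)$ ball with the same center, and hence by \lref{randomball}(i) has size at most $(1+o(1))(np)^{d-1}=o(n)<n/2$. Thus any separating family of balls satisfies the hypotheses of \tref{katona} with $M=n$ and $m=(1+o(1))(np)^{d-1}$, which yields $M(G(n,p))\ge(1-o(1))\frac{n}{(np)^{d-1}}\frac{\log n}{\log(n/(np)^{d-1})}$; here I use $n/(np)^{d-1}\to\infty$ to absorb the factor $e$ inside the logarithm.

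The main obstacle is precisely this uniform size bound on useful balls: unlike in \tref{random}, where the radius is pinned at $r\le d-2$, a search for $M(G)$ may mix balls of many radii, and the lower bound hinges on the observation that every separating ball has radius at most $d-1$ and therefore size $(1+o(1))(np)^{d-1}$. Once that is in place, everything reduces to the routine Katona estimate and first-moment calculation already carried out for \tref{random}.
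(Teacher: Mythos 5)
Your proposal is correct and follows exactly the route the paper intends: the paper gives no written proof of \tref{randmore}, saying only that \lref{randomball} ``enables us to imitate the proof of \tref{random}'', and your argument is precisely that imitation, with \lref{randomball}(i) supplying the ball and symmetric-difference sizes for the first-moment upper bound and the Katona lower bound. Your handling of the one genuinely new point --- deriving $p^{d-1}n^{d-2}-2\log n\to-\infty$ from $(np)^{d-1}=o(n)$ so that the diameter is a.a.s.\ exactly $d$, discarding balls of radius at least $d$ as non-separating, and bounding every remaining ball by the radius-$(d-1)$ ball with the same center before applying \tref{katona} --- correctly fills in the details the paper leaves implicit.
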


\section{Graphs of bounded maximum degree}
\label{sec:maxdegsec}

The aim of this section is to prove \tref{maxdeg}, that is for any $G \in \cG_{\Delta,n}$ we have $A(G)=\Theta_{\Delta}(\log n)$ where $\cG_{\Delta,n}$ denotes the set of finite graphs on $n$ vertices with maximum degree at most $\Delta$. Note that the lower bound $\log n\le A(G)$ holds for any graph as the sequence of answers must differ for all vertices of $G$. The idea behind \tref{maxdeg} is the following: for every $G \in \cG_{\Delta}$, $v \in V$ and $r\geq 1$  we have $|B_G(v,r+1)|/|B_G(v,r)| \le \Delta-1$, and this enables us to imitate the halving argument of adaptive binary search. This is formulated in the following lemma.

\begin{lemma}\label{lem:predelta}

Let  $G$ be a connected graph with maximum degree $\Delta$. For every $X \subseteq V(G)$ with $|X| \geq \Delta+2$, there exists a vertex $v(X) \in V(G)$ and a natural number $r(X)$ satisfying $$ \frac{1}{\Delta+2}|X| \leq |B(v(X),r(X)) \cap X | \leq \frac{\Delta+1}{\Delta+2}|X|.$$

\end{lemma}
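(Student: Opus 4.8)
The plan is to locate the desired ball by an extremal (minimality) argument rather than by directly growing a ball and trying to bound the jump in $|B(v,r)\cap X|$ at each step. For every vertex $v$ I would define $\rho(v)$ to be the least radius $r$ for which $|B(v,r)\cap X|\ge \frac{1}{\Delta+2}|X|$; this is well defined since $G$ is connected and so some ball around $v$ eventually contains all of $X$. Let $v_0$ be a vertex minimizing $\rho(v)$ and set $\rho_0=\rho(v_0)$. I claim the pair $(v_0,\rho_0)$ already works. By the definition of $\rho_0$ the lower bound $|B(v_0,\rho_0)\cap X|\ge \frac{1}{\Delta+2}|X|$ holds automatically, so the whole content of the lemma is the upper bound, which I would establish by contradiction.

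So suppose $|B(v_0,\rho_0)\cap X|>\frac{\Delta+1}{\Delta+2}|X|$. First note that $\rho_0\ge 1$: since $|X|\ge \Delta+2$ we have $\frac{\Delta+1}{\Delta+2}|X|\ge \Delta+1\ge 1\ge |B(v_0,0)\cap X|$, so the assumed overshoot cannot occur at radius $0$. Minimality of $\rho_0$ as the \emph{first} radius reaching the threshold gives $|B(v_0,\rho_0-1)\cap X|<\frac{1}{\Delta+2}|X|$, and subtracting shows that the sphere $S:=B(v_0,\rho_0)\setminus B(v_0,\rho_0-1)$ satisfies $|S\cap X|>\frac{\Delta+1}{\Delta+2}|X|-\frac{1}{\Delta+2}|X|=\frac{\Delta}{\Delta+2}|X|$.

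The key step is now to find a single neighbour of $v_0$ whose ball of radius $\rho_0-1$ captures many of these sphere points. Every $y\in S$ lies at distance exactly $\rho_0$ from $v_0$, so a shortest $v_0$--$y$ path has a second vertex $w\in N(v_0)$ with $d(w,y)=\rho_0-1$, i.e. $y\in B(w,\rho_0-1)$. Assigning each $y\in S\cap X$ to one such neighbour partitions $S\cap X$ into at most $\Delta$ classes, so some neighbour $w^\ast$ receives more than $\frac{1}{\Delta}\cdot\frac{\Delta}{\Delta+2}|X|=\frac{1}{\Delta+2}|X|$ of them; all of these lie in $B(w^\ast,\rho_0-1)$, whence $|B(w^\ast,\rho_0-1)\cap X|>\frac{1}{\Delta+2}|X|$. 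On the other hand $w^\ast$ is a vertex of $G$, so minimality of $\rho_0$ forces $\rho(w^\ast)\ge \rho_0$ and in particular $|B(w^\ast,\rho_0-1)\cap X|<\frac{1}{\Delta+2}|X|$. These two inequalities contradict each other, so the overshoot is impossible and $(v_0,\rho_0)$ satisfies both bounds.

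The only delicate point is the choice of the threshold $\frac{1}{\Delta+2}|X|$ in the definition of $\rho$: it is exactly the value that makes the pigeonhole gain (a factor $\tfrac1\Delta$ applied to the sphere mass $\frac{\Delta}{\Delta+2}|X|$) land back on the same threshold, producing the clean contradiction with minimality. I expect this balancing of constants, together with the observation that every sphere point is reached through a neighbour of the centre, to be the heart of the argument; the remaining verifications (that $\rho_0\ge 1$, well-definedness, and the arithmetic of the subtraction) are routine.
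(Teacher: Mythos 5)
Your proof is correct and is essentially the paper's argument in mirror image: the paper minimizes over all vertices the first radius at which a ball reaches the \emph{upper} threshold $\frac{\Delta+1}{\Delta+2}|X|$ and outputs a neighbour of the minimizer at radius one less, while you minimize the first radius reaching the \emph{lower} threshold $\frac{1}{\Delta+2}|X|$ and output the minimizer itself, routing the contradiction through a neighbour. In both cases the engine is identical --- covering the ball (or its outer sphere) of radius $r$ by the at most $\Delta$ neighbour-balls of radius $r-1$, plus the pigeonhole arithmetic $\Delta\cdot\frac{1}{\Delta+2}=\frac{\Delta}{\Delta+2}=\frac{\Delta+1}{\Delta+2}-\frac{1}{\Delta+2}$ --- so this is the same approach.
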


\begin{proof} For all $w \in V(G)$ and $X \subseteq V(G)$ we define the following function: $$ f(w, X) := \min \left\{r : | B(w,r) \cap X | \geq \frac{\Delta+1}{\Delta+2}|X| \right\}.$$

Since $G$ is connected,  $f(w,X)$ is well defined for all $w \in V(G)$.  Also observe that $f(w,X) \geq 2$, since $\frac{\Delta+1}{\Delta+2}|X| \geq \Delta+1$.

Define $m(X)=\min\{f(w,X) : w \in V(G)\}$, and let $w(X)$ be any vertex satisfying $f(w(X),X)=m(X)$.
We claim that we can choose one of the neighbours of $w(X)$ as our vertex $v(X)$.
Suppose, for the sake of contradiction, that none of the neighbors of $w(X)$ fulfill the statement of the lemma. By the definition of $m(X)$, we know that $ | B(u, m(X)-1) \cap X | \leq \frac{1}{\Delta+2}|X|$ holds for all vertices $u \in V(G)$. But we also know that $$B(w(X),m(X)) \subseteq \bigcup_{\{u,w(X)\} \in E(G)} B(u, m(X)-1).$$
This gives us
$$X \cap B(w(X),m(X)) \hskip 0.5truecm \subseteq \hskip 0.5truecmX \cap \bigcup_{\{u,w(X)\} \in E(G)} B(u, m(X)-1).$$
This gives a contradiction since $|X \cap B(w(X),m(X))| \ge  \frac{\Delta+1}{\Delta+2}|X|$ holds by the definition of $m(X)$, whereas $|X \cap \bigcup_{\{u,w(X)\} \in E(G)} B(u, m(X)-1)| \le \sum_{\{u,w(X)\} \in E(G)} |B(u, m(X)-1)| \le \frac{\Delta}{\Delta+2}|X|$.

\end{proof}

Now we are ready to prove \tref{maxdeg}.
\begin{proof}[Proof of \tref{maxdeg}:] Let $G \in \cG_{\Delta,n}$ be a connected graph and set $X_0=V(G)$. For $i>0$ we repeatedly apply \lref{predelta} to $X_{i-1}$ as long as $|X_{i-1}| \geq \Delta+2$ holds. We ask the query $B(v(X_{i-1}), r(X_{i-1}))$ and let

$$X_i:=
\begin{cases}
X_{i-1} \cap B(v(X_{i-1}), r(X_{i-1})) &\text{if the answer is yes;} \\
X_{i-1} \setminus B(v(X_{i-1}), r(X_{i-1})) &\text{if the answer is no.}
\end{cases}$$

Clearly the unknown vertex is contained in $X_i$ for all values of $i$. If $|X_{i-1}| \leq \Delta+2$, then we ask its vertices one by one (more precisely the balls of radius 0 around its vertices).
The length of this process is at most $$\log_{\frac{\Delta+3}{\Delta+2}}n + \Delta+1.$$ This completes the proof of the theorem.
\end{proof}
By the same argument, one can prove a version of Theorem~\ref{thm:maxdeg} where the upper bound $\Delta(n)$ on the maximum degree in the graph is a function which grows with $n$.  Namely, we  obtain a $\log_{\frac{\Delta(n)+3}{\Delta(n)+2}}n + \Delta(n) +2$ upper bound on the length of an adaptive search.

\bibliography{hypsearch}
\bibliographystyle{abbrv}

\end{document}